\documentclass{amsart}
\usepackage{amssymb,amsmath,amscd,xy,graphicx,textcomp}
\newtheorem{theorem}{Theorem}[section]
\newtheorem{lemma}[theorem]{Lemma}

\newtheorem{definition}[theorem]{Definition}

\newtheorem{proposition}[theorem]{Proposition}

\xyoption{arrow}

\xyoption{matrix}

\setcounter{tocdepth}{1}


\def\C{\mathbb{C}}
\def\R{\mathbb{R}}
\def\Z{\mathbb{Z}}

\def\tree{\mathcal{T}}

\def\ql{\backslash \! \backslash}

\title{Semigroups of $sl_3(\C)$ tensor product invariants}
\author{Christopher Manon ${}^{\dagger}$, Zhengyuan Zhou ${}^{\ddagger}$}
\thanks{The first author was supported by the NSF fellowship DMS-0902710\\
$\dagger$ UC Berkeley, George Mason University\\
$\ddagger$ UC Berkeley, Stanford University}


\begin{document}

\begin{abstract} 
We find presentations for a family of semigroup algebras related to the problem of decomposing $sl_3(\C)$ tensor products.  Along the way we find new toric degenerations 
of the Grassmannian variety $Gr_3(\C^n)$ which are $T-$invariant for $T \subset GL_n(\C)$ the diagonal torus.  
\end{abstract}

\maketitle

\tableofcontents

\smallskip

\section{Introduction}

A central problem in combinatorial representation theory is to compute the dimension
of the space of invariant vectors 

\begin{equation}
H(\vec{\lambda}) = (V(\lambda_1) \otimes \ldots \otimes V(\lambda_n))^{G}\\
\end{equation}

\noindent
 in a tensor product of irreducible representations of a reductive group $G$, or equivalently a reductive Lie algebra $Lie(G) = \mathfrak{g}.$   This problem is related to many interesting results each with their own combinatorial flavor, such as the Littlewood-Richardson rule, Kostant's multiplicity formula, Steinberg's formula, Littelman's path model, the quiver methods of Derksen and Weyman \cite{DW}, and the crystal bases of Kashiwara and Lusztig \cite{Lu}.   Interest in formulas for the dimensions of these spaces also stems from their numerous appearances in geometry, the Schubert calculus in the cohomology of flag varieties, and the Hilbert functions of classical configuration spaces from invariant theory to name two.  See Kumar's survey paper \cite{K} for an account of many of these results.

It is an especially beautiful feature of reductive Lie groups that this dimension can always be found by counting the lattice points in a convex polytope which depends on the weights $\lambda_1, \ldots, \lambda_n$ and some extra combinatorial information, namely a trivalent tree $\tree$ with $n$ leaves labelled $\{1, \ldots, n\}$ see \cite{BZ1}, \cite{BZ2}, \cite{GP},\cite{KTW}, \cite{M}.  Let $\Delta$ be a Weyl chamber of $\mathfrak{g},$ the following is proved in \cite{M}.

\begin{theorem}
For each trivalent tree $\tree$ with $n$ labelled leaves, there is a cone $Q_{\tree}(\mathfrak{g})$, along with a linear map $\pi_{\tree}: Q_{\tree}(\mathfrak{g}) \to \Delta^n,$
such that the number of lattice points in $\pi_{\tree}^{-1}(\vec{\lambda}) = Q_{\tree}(\vec{\lambda})$ is equal to the dimension of $H(\vec{\lambda}).$
\end{theorem}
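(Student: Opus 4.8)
The plan is to construct $Q_{\tree}(\mathfrak{g})$ one trivalent vertex at a time and to glue the pieces along the internal edges, using associativity of the tensor product to reduce the count $\dim H(\vec{\lambda})$ to a sum of products of triple-product multiplicities. The base case is the \emph{tripod} $\tree_0$, the tree with three leaves and a single trivalent vertex, where $H(\lambda_1,\lambda_2,\lambda_3) = (V(\lambda_1)\otimes V(\lambda_2)\otimes V(\lambda_3))^{G}$. Here the classical theory of Berenstein and Zelevinsky \cite{BZ1}, \cite{BZ2} supplies a rational polyhedral cone $Q_{\tree_0}(\mathfrak{g})$, lying in $\Delta^3$ times an auxiliary lattice of hive- or tableau-type data, together with a projection $\pi_{\tree_0}$ to $\Delta^3$ whose fiber over $(\lambda_1,\lambda_2,\lambda_3)$ contains exactly $\dim H(\lambda_1,\lambda_2,\lambda_3)$ lattice points. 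I would take this as the fundamental building block.

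The inductive mechanism is the branching identity along an internal edge. If an internal edge $e$ splits $\tree$ into subtrees $\tree_1, \tree_2$, each carrying a new leaf labelled $e$, then, using that $(V(\mu)\otimes V(\mu^*))^{G}$ is one dimensional precisely for dominant $\mu$ (with $\mu^* = -w_0\mu$), resolving the identity along $e$ gives
\begin{equation}\label{eq:branch}
H_{\tree}(\vec{\lambda}) \;\cong\; \bigoplus_{\mu} H_{\tree_1}(\vec{\lambda}_1,\mu)\otimes H_{\tree_2}(\mu^*,\vec{\lambda}_2),
\end{equation}
the sum running over dominant weights $\mu$, and hence
\begin{equation}\label{eq:fact}
\dim H_{\tree}(\vec{\lambda}) \;=\; \sum_{\mu} \dim H_{\tree_1}(\vec{\lambda}_1,\mu)\cdot \dim H_{\tree_2}(\mu^*,\vec{\lambda}_2).
\end{equation}

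Given cones $Q_{\tree_1}(\mathfrak{g})$, $Q_{\tree_2}(\mathfrak{g})$ for the two subtrees, I would define $Q_{\tree}(\mathfrak{g})$ as their fiber product over the weight lattice attached to $e$: the subcone of $Q_{\tree_1}\times Q_{\tree_2}$ on which the $\mu$-coordinate of the first factor agrees with the $\mu^*$-coordinate of the second under $-w_0$. Letting $\pi_{\tree}$ forget all internal-edge coordinates, the lattice points of $Q_{\tree}(\vec{\lambda})$ are in bijection with pairs of lattice points, one from each $Q_{\tree_i}(\vec{\lambda}_i,\mu)$, that share a common dominant $\mu$; summing over $\mu$ reproduces \eqref{eq:fact}. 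Carried out over all internal edges, this realizes $Q_{\tree}(\mathfrak{g})$ concretely as the fiber product of copies of $Q_{\tree_0}(\mathfrak{g})$, one per trivalent vertex, amalgamated along the weight lattices of the internal edges; this is also the weight cone of the toric degeneration that organizes these spaces into a flat family.

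The main obstacle is to show that the fiber product preserves lattice-point counts exactly, introducing no arithmetic defect. Two features of the base cone must be secured: first, that the projection of $Q_{\tree_0}(\mathfrak{g})$ onto any single boundary-weight lattice surjects onto the dominant cone and is surjective on lattice points, so that every $\mu$ with both factors nonempty actually contributes to \eqref{eq:fact}; and second, that the semigroup of lattice points of $Q_{\tree_0}(\mathfrak{g})$ is saturated and the gluing sublattices are compatible (unimodular along $e$), so that the lattice of the fiber-product cone is the honest fiber product of lattices and no points are created or destroyed. Establishing these normality and surjectivity statements for the Berenstein--Zelevinsky cones is the technical heart; granting them, the equality of \eqref{eq:fact} with the glued lattice-point count closes the induction and yields the theorem.
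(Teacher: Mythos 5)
Your construction is the same one the paper works with: Section 2 of the paper defines $Q_{\tree}(sl_m(\C))$ exactly as your fiber product, pasting one Berenstein--Zelevinsky triangle per trivalent vertex along the internal edges, with the edge-matching condition $a_{2i-1}+a_{2i}=A_{2(m-i)-1}+A_{2(m-i)}$ implementing your identification $\mu \leftrightarrow \mu^*=-w_0\mu$. Be aware, though, that the paper does not prove this theorem at all --- it is imported from \cite{M} --- so the only comparison available is with that construction, which you have reproduced, together with the standard branching-identity induction, which is indeed the right argument. One substantive correction: the conditions you isolate as the ``technical heart'' (normality/saturation of the BZ semigroup, unimodularity of the gluing, surjectivity onto the dominant cone on lattice points) are not actually needed, and leaving them as an unproven crux overstates the difficulty. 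The count closes on its own: the edge projection is $\Z$-linear, so every lattice point of the glued cone restricts to a pair of lattice points of the two sub-cones sharing a common dominant \emph{integral} weight $\mu$, and conversely every such pair glues to a lattice point; hence the number of lattice points in the fiber over $\vec{\lambda}$ is exactly $\sum_{\mu} \#\,Q_{\tree_1}(\vec{\lambda}_1,\mu)\cdot \#\,Q_{\tree_2}(\mu^*,\vec{\lambda}_2)$, the sum over dominant integral $\mu$, which matches your dimension factorization term by term by the inductive hypothesis. Weights $\mu$ for which either factor vanishes contribute zero to both sides, so no surjectivity statement is required; and saturation is automatic, since each semigroup here is by definition the set of \emph{all} lattice points of a rational polyhedral cone --- saturation matters for the degeneration and presentation results elsewhere in the paper, but plays no role in this counting statement.
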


Many nonisomorphic cones which serve in this role can be constructed, but we will gloss over this point as only one type, see section \ref{quilt}, will be important here. The object $Q_{\tree}(\mathfrak{g})$ has essentially been available at least since the paper of Berenstein and Zelevinksy, \cite{BZ1}.  In the case $n =3,$ the cone $Q_3(GL_m(\C))$ 
was the subject of much work of Knutson, Tao, and Woodward, \cite{KTW}, wherein the saturation conjecture of Klyachko was resolved.  The $sl_2(\C)$ cases $Q_{\tree}(sl_2(\C))$ have been understood in general, mostly in their role as toric degenerations of the Grassmannian variety $Gr_2(\C^n)$, see \cite{SpSt} and our remarks below. However, not much has been said about the structure of these semigroups when $n > 3,$ $\mathfrak{g} \neq sl_2(\C)$.  In this paper we study the "next" case, we determine the algebraic structure of the semigroups $Q_{\tree}(sl_3(\C))$ for trees of arbitrary size and topology.

\begin{theorem}\label{main}
The semigroup $Q_{\tree}(sl_3(\C))$ is generated by lattice points $w$ such that
$\pi_{\tree}(w)$ are all fundamental weights of $sl_3(\C).$  These generators are 
always subject to at most quadratic and cubic relations.  
\end{theorem}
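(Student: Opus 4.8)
The plan is to reduce Theorem \ref{main} to a local computation at a single trivalent vertex and then to transport the conclusion across $\tree$ by an inductive gluing argument. Using the quilt description of Section \ref{quilt}, I would first realize the semigroup $Q_{\tree}(sl_3(\C))$ as an iterated fiber product of local factors: a lattice point of $Q_{\tree}(sl_3(\C))$ assigns to every edge of $\tree$ a dominant weight of $sl_3(\C)$, i.e.\ a point of the monoid $M \cong \Z_{\ge 0}^2$ generated by the fundamental weights $\omega_1,\omega_2$, and to every trivalent vertex $v$ with incident edge weights $(\mu,\nu,\eta)$ a lattice point of the Berenstein--Zelevinsky semigroup $S_v$ recording a basis vector of $(V(\mu)\otimes V(\nu)\otimes V(\eta))^{\SL_3}$; the only constraint is that the two vertices meeting along an internal edge assign it the same weight. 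Thus $Q_{\tree}(sl_3(\C))$ is the fiber product of the $S_v$ over the edge monoids $M$, and the induced $\pi_{\tree}$ is the projection to the leaf weights.

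The base case is a single vertex. Here I would describe $S_v$ by $sl_3$ Berenstein--Zelevinsky triangles, a saturated sub-semigroup of $\Z_{\ge 0}^9$ cut out by the boundary and hexagon conditions, and compute its Hilbert basis by a finite calculation in fixed dimension. I expect the minimal generators to be exactly the triangles whose three edge weights are fundamental (allowing the trivial weight $0$): the two ``determinant'' generators $a=(\omega_1,\omega_1,\omega_1)$ and $\bar a=(\omega_2,\omega_2,\omega_2)$, together with the ``pairing'' generators $p_{ij}$ carrying $\omega_1$ on the $i$-th edge, $\omega_2$ on the $j$-th, and $0$ on the third. Computing the toric ideal of relations among these, I anticipate that it is generated by binomials of degree two together with the cubic family $a\,\bar a = p_{12}\,p_{23}\,p_{31}$, obtained by writing the weight $(\omega_1+\omega_2,\omega_1+\omega_2,\omega_1+\omega_2)$ either as a product of the two determinant generators or as a product of three pairing generators. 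This relation is the combinatorial shadow of the classical determinant identity and is precisely what forces the presence of cubics. This settles the case of the tripod and, together with the Knutson--Tao--Woodward description cited for $n=3$ in \cite{KTW}, seeds the induction.

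The inductive step is a gluing lemma framed in the language of toric fiber products. Writing $\tree$ as two subtrees $\tree_1,\tree_2$ joined along an internal edge $e$, the algebra $\C[Q_{\tree}(sl_3(\C))]$ is the toric fiber product of $\C[Q_{\tree_1}(sl_3(\C))]$ and $\C[Q_{\tree_2}(sl_3(\C))]$ glued over the edge monoid $M\cong\Z_{\ge 0}^2$. Because $M$ is free on $\omega_1,\omega_2$ and every fundamental generator restricts to $0$, $\omega_1$, or $\omega_2$ on $e$, the generators on the two sides can always be matched across $e$; hence the fiber product is again generated in fundamental-weight degree. For the relations I would invoke the degree bound for toric fiber products: if both factor ideals are generated in degree at most $D$ and the gradings along $e$ satisfy the required compatibility hypotheses, then the glued ideal is generated by lifts of the two factor ideals together with quadratic exchange binomials from the gluing. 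With $D=3$ from the vertex computation, and the gluing contributing only quadrics, the overall bound of three is preserved, and iterating over the internal edges of $\tree$ yields the theorem.

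The main obstacle is the relations part of the gluing step, since the ideal of a fiber product is in general strictly larger than the sum of the factor ideals. Concretely, I must verify that the edge gradings induced by $M\cong\Z_{\ge 0}^2$ meet the hypotheses under which the toric fiber product adds only quadratic relations, so that the vertex cubics do not compound into higher-degree syzygies under repeated gluing. The strategy I would use is to fix a term order adapted to the tree and run an S-pair reduction: show that every S-pair among the lifted factor binomials and the quadratic gluing binomials reduces to zero modulo relations of degree at most three. The essential structural input is that the only source of cubics is internal to a vertex, namely the determinant relation $a\,\bar a = p_{12}p_{23}p_{31}$, while all purely combinatorial matching along edges is quadratic; once this dichotomy is established it gives the uniform degree bound and, specialized to the caterpillar trees, recovers the cubic toric degenerations of $\Gr_3(\C^n)$ promised in the abstract.
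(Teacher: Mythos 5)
Your proposal is correct in outline and shares the paper's overall architecture: the same fiber-product decomposition of $Q_{\tree}(sl_3(\C))$ over the edge monoid $\Z_{\geq 0}^2$, the same tripod base case settled by a finite computation (the paper uses 4ti2 and finds exactly your generators $X, Y, P_{ij}$ with the single cubic $XY = P_{12}P_{23}P_{31}$, in fact with no quadrics at the tripod level), and the same generator argument exploiting unique factorization in $\Z_{\geq 0}^2$ to match decompositions across an edge. Where you genuinely diverge is the relations step. The paper (Proposition \ref{relt}) argues by a direct induction that peels off one trinode $T$ adjacent to two leaves: a relation is restricted to the complementary subtree $\tree'$, the inductive rewriting sequence on $\tree'$ is lifted across the separating edge $e$ (each rewriting step preserves the list of values along $e$, so it extends by zero over $T$), the cubic relation is then applied over $T$, and a final quadratic swap finishes. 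You instead split $\tree$ at an arbitrary internal edge and appeal to the toric fiber product degree bound, deferring to an S-pair/Gr\"obner verification. One caution: the off-the-shelf TFP theorem does not apply verbatim here, since its hypothesis that the grading vectors be linearly independent fails --- generators whose support misses the gluing edge have edge-degree $0$, so the grading set is $\{0,\omega_1,\omega_2\}$. Consequently the verification you flag as the ``main obstacle'' is not a routine hypothesis check but the actual mathematical content of the step; once carried out (and it does go through, because degree-$0$ generators pair with $0$ on the other side and the nonzero degrees still match uniquely), it amounts to precisely the lifting-plus-exchange surgery in the paper's proof. The trade-off: your packaging is more modular and makes the degree bound conceptually transparent --- cubics are local to vertices, gluing contributes only quadrics --- while the paper's hands-on induction avoids the hypothesis mismatch entirely, at the cost of being terser and less reusable.
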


A rather beautiful feature of the semigroup $Q_{\tree}(sl_2(\C))$ is that its generators
can be put in bijection with the paths in $\tree$ that connect distinct leaves, see \cite{SpSt}. We find a similar result holds for each $Q_{\tree}(sl_3(\C)).$  We say a subtree $\tree' \subset \tree$ is proper if every leaf of $\tree'$ is a leaf of $\tree.$

\begin{theorem}\label{main2}
There are exactly $2$ generators of $Q_{\tree}(sl_3(\C))$ for each proper $\tree' \subset \tree.$  Each semigroup $Q_{\tree}(sl_3(\C))$ is minimally generated by $2(2^n - n -1)$ elements. 
\end{theorem}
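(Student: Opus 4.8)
The plan is to identify the minimal generators of $Q_{\tree}(sl_3(\C))$ explicitly and then count them. By Theorem \ref{main} the semigroup is generated by lattice points $w$ whose leaf weights $\pi_{\tree}(w)$ are all fundamental, so it suffices to determine the \emph{indecomposable} such points (those not expressible as a sum of two nonzero elements of $Q_{\tree}(sl_3(\C))$), since the indecomposables are precisely the unique minimal generating set of a pointed affine semigroup. First I would record that a lattice point of $Q_{\tree}$ is an assignment of a dominant $sl_3(\C)$-weight to every edge of $\tree$ subject, at each trivalent vertex with incident weights $\alpha,\beta,\gamma$, to the condition $(V(\alpha)\otimes V(\beta)\otimes V(\gamma))^{\SL_3(\C)}\neq 0$. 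Using that neither $(\omega_i,0,0)$ nor $(\omega_i,\omega_i,0)$ carries an invariant, I would then show that the support of an indecomposable $w$ (the edges of nonzero weight) is a \emph{proper} subtree $\tree'\subset\tree$, reducing the problem to classifying indecomposables supported on a fixed $\tree'$.

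The heart of the argument is a structural classification. Working with fundamental leaf weights places us inside Kuperberg's $A_2$ spider: identifying $V(\omega_1)\cong\C^3$ and $V(\omega_2)\cong(\C^3)^*$, a configuration on $\tree'$ is an $sl_3(\C)$ web, and since $\tree'$ has no cycles the only webs are orientations of $\tree'$ in which every internal vertex is a source (all incident edges outgoing, contracting via $\det$) or a sink (all incident edges incoming, contracting via $\det^{-1}$). I would prove that an indecomposable $w$ has all of its edge weights fundamental and is exactly one such source--sink web; the relevant local invariants $(V(\omega_1)^{\otimes 3})^{\SL_3(\C)}$ and its dual are one-dimensional, which forces the internal edge weights to be fundamental rather than the adjoint $\omega_1+\omega_2$ (a configuration with an adjoint internal weight splits, as already in the four-leaf case, into two crossing strings). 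The key counting step is that $\tree'$ admits exactly two source--sink orientations: an internal edge joining two internal vertices must run from a source to a sink, so a valid orientation is the same datum as a proper two-coloring of the graph of internal vertices of $\tree'$, which is a tree and hence bipartite, giving exactly two colorings. These two orientations are interchanged by global reversal, which is the outer automorphism $\omega_1\leftrightarrow\omega_2$, producing the stated dual pair of generators for each $\tree'$.

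To see that each source--sink web $w$ is genuinely indecomposable, suppose $w=A+B$ with $A,B\in Q_{\tree}(sl_3(\C))$ nonzero. Every edge of $\tree'$ carries a fundamental weight $\omega_i$ in $w$, and $\omega_i$ is extremal in the cone of dominant weights, so the only way to write $\omega_i=\mu+\nu$ with $\mu,\nu$ dominant is $\{\mu,\nu\}=\{0,\omega_i\}$; hence each edge lies entirely in $A$ or entirely in $B$. At any internal vertex the local weights of $A$ must themselves satisfy the invariance condition, but $(\omega_i,\omega_i,0)$ and $(\omega_i,0,0)$ carry no $\SL_3(\C)$-invariant, so the number of edges of $w$ assigned to $A$ at each vertex is $0$ or $3$. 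Since $\tree'$ is connected this forces $A$ or $B$ to vanish, a contradiction. Thus the source--sink webs are exactly the indecomposable elements, and by Theorem \ref{main} together with the decomposition of an arbitrary fundamental-leaf-weight point into webs, they form the minimal generating set.

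It remains to count proper subtrees. Sending a subtree to its set of leaves gives a bijection between proper subtrees of $\tree$ and subsets $S\subseteq\{1,\dots,n\}$ with $|S|\geq 2$: the convex hull in $\tree$ of such an $S$ is a proper subtree whose leaf set is exactly $S$, and conversely every proper subtree is the convex hull of its leaf set. There are $\sum_{k\geq 2}\binom{n}{k}=2^n-n-1$ such subsets, and multiplying by the two orientations yields $2(2^n-n-1)$ minimal generators, exactly two attached to each proper $\tree'$. I expect the main obstacle to be the structural classification of the second paragraph --- proving that no indecomposable generator carries a nonfundamental internal weight and that every fundamental-leaf-weight point decomposes into source--sink webs --- since this is where the representation theory of $sl_3(\C)$, equivalently the combinatorics of the $A_2$ spider on a tree, does the real work; the subtree count and the indecomposability lemma are then comparatively formal.
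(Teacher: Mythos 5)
Your outline reaches the same classification as the paper — minimal generators are exactly the configurations supported on a proper subtree $\tree'$ whose local datum at each trinode is the determinant invariant, its dual, or the pairing in $V(\omega_1)\otimes V(\omega_2)$ (your source/sink/pass-through orientations), two per subtree, and $2^n-n-1$ proper subtrees via leaf sets. Your indecomposability argument (each fundamental edge weight lies wholly in $A$ or $B$; no invariants in $V(\omega_i)\otimes V(\omega_j)$ or $V(\omega_i)$; connectivity forces $A$ or $B$ to vanish) is sound, and is in fact more detailed than the paper's one-line remark that "the same observation shows that these elements must be minimal." Your bipartite-coloring count of orientations is the paper's propagation argument in different language (note only the degenerate case where $\tree'$ is a path between two leaves: there are then no internal trivalent vertices to color, yet still two orientations).

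The genuine gap is the step you yourself flag and defer: proving that every generator with fundamental leaf weights decomposes into source--sink webs, equivalently that no indecomposable carries a non-fundamental internal weight. Your sketch of this step has two concrete problems. First, a lattice point of $Q_{\tree}(sl_3(\C))$ is \emph{not} an assignment of dominant weights to edges satisfying invariance conditions: it is a quilt of Berenstein--Zelevinsky triangles, and the fiber over a fixed edge-weighting has cardinality equal to a product of triple multiplicities, which exceeds one once internal weights grow. Any decomposition argument must therefore operate on the BZ data, not merely on edge weights. Second, the adjoint weight $\omega_1+\omega_2$ is not the only non-fundamental internal weight compatible with fundamental leaf weights — for instance $2\omega_2$ occurs on the edge opposite two leaves weighted $\omega_1$ — so "splits into two crossing strings" disposes of one case of a general claim, not the claim itself. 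The paper closes exactly this gap by different means: a computer-verified presentation (via 4ti2) of the base case $Q_3(sl_3(\C))$, with generators $X$, $Y$, $P_{ij}$ and the single relation $XY=P_{12}P_{23}P_{31}$, followed by a fiber-product lemma — gluing two semigroups along an edge on which all generators restrict to fundamental or zero weights preserves generation, because decompositions in $\Z_{\geq 0}^2$ into $(1,0)$ and $(0,1)$ are unique — and induction on the tree. Without either that computation-plus-induction or a genuinely worked-out spider argument (which would require a dictionary between BZ quilts and webs on $\tree$, precisely where the multiplicity issue above lives), the structural classification, and hence the count $2(2^n-n-1)$, is not yet established.
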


\noindent
We also find a combinatorial description of the defining relations of $Q_{\tree}(sl_3(\C)).$

Now for a few more words about the relation of the spaces $H(\vec{\lambda})$ to invariant theory. For general reductive $G$, the spaces $H(\vec{\lambda})$ serve as the global sections of an ample line bundle $\mathcal{L}_{\vec{\lambda}}$ on a $configuration$ $space,$

\begin{equation}
M_{\vec{\lambda}} = G \ql_{\vec{\lambda}} [G/P_1 \times \ldots \times G/P_n].\\
\end{equation}

\noindent
Here $P_i$ is the parabolic subgroup of $G$ associated to $\lambda_i$, $G/P_i$ is the flag variety, and the quotient is a Mumford Geometric Invariant Theory quotient.   When $G = SL_m(\C)$, and each $\lambda_i$ is rank $1,$ ie a multiple of the first fundamental weight $\omega_1,$ the space $M_{\vec{\lambda}}$ is a space of weighted configurations of $n$ points on the projecive space $\mathbb{P}^m.$  

Determining the structure of the projective coordinate ring $R_{\vec{\lambda}}$ of this space is a classical problem in invariant theory, see for example \cite{HMSV} and \cite{D}, $11.2$.

In \cite{M}, the first author constructed toric degenerations of $R_{\vec{\lambda}}$ to sub-algebras of $\C[Q_{\tree}(\mathfrak{g})],$ for all $\vec{\lambda}.$  We hope that by understanding the basic structure of $Q_{\tree}(sl_3(\C))$, the necessary combinatorics to resolve this question can be brought into reach for $sl_3(\C)$.  A similar program was successful in resolving the problem for points on $\mathbb{P}^1$ in part by understanding the cone $Q_{\tree}(sl_2(\C)),$ see \cite{HMSV}. 

We now recall the first fundamental theorem of invariant theory for $sl_3(\C),$ stated
in a slightly peculiar way.

\begin{theorem}[First Fundamental Theorem of Invariant Theory]
The projective coordinate ring $P_{3, n}$ of the Pl\"ucker embedding of the Grassmannian variety $Gr_3(\C^n)$ can be identified with the $sl_3(\C)$ invariant vectors in the algebra $\C[x_1, x_2, x_3]^{\otimes n}.$  
\end{theorem}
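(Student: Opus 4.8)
The plan is to recognize this as the classical first fundamental theorem for the group $\SL_3(\C)$, repackaged through the identification
\[
\C[x_1,x_2,x_3]^{\otimes n} \;\cong\; \C[M_{3\times n}] \;=\; \mathrm{Sym}(\C^3\otimes\C^n),
\]
where the $j$-th tensor factor supplies the three entries of the $j$-th column of a $3\times n$ matrix and $\SL_3(\C)$ acts diagonally by left multiplication. First I would note that since $\SL_3(\C)$ is connected, a polynomial is killed by $sl_3(\C)$ exactly when it is fixed by $\SL_3(\C)$, so that $(\C[x_1,x_2,x_3]^{\otimes n})^{sl_3(\C)}=\C[M_{3\times n}]^{\SL_3(\C)}$. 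The candidate generators are the $3\times 3$ minors $[\,i_1\,i_2\,i_3\,]$ on columns $i_1<i_2<i_3$; each is $\SL_3(\C)$-invariant because left multiplication scales a maximal minor by $\det(g)=1$, and as an element of the tensor product the minor is multilinear in factors $i_1,i_2,i_3$, spanning the one-dimensional space $H(\omega_1,\omega_1,\omega_1)$ sitting in the corresponding copy of $V(\omega_1)^{\otimes 3}$.

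The main engine is the $(\GL_3,\GL_n)$ Cauchy (Howe) duality
\[
\mathrm{Sym}(\C^3\otimes\C^n) \;\cong\; \bigoplus_{\ell(\lambda)\le 3} V_{\GL_3}(\lambda)\otimes V_{\GL_n}(\lambda),
\]
the sum over partitions with at most three rows. Restricting $V_{\GL_3}(\lambda)$ to $\SL_3(\C)$ gives the trivial module precisely when $\lambda=(d,d,d)$, so taking $\SL_3(\C)$-invariants leaves
\[
\C[M_{3\times n}]^{\SL_3(\C)} \;\cong\; \bigoplus_{d\ge 0} V_{\GL_n}(d\,\omega_3),
\]
with Plücker degree $d$ matching total matrix degree $3d$. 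The crucial structural consequence is that each graded piece is a \emph{single irreducible} $\GL_n$-module. Since the brackets span the $d=1$ piece $V_{\GL_n}(\omega_3)$, the image of the degree-$d$ multiplication map on products of brackets is a nonzero $\GL_n$-submodule of an irreducible, hence all of it. This is exactly the statement that the minors generate every invariant, i.e.\ the first fundamental theorem proper.

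To finish I would assemble the ring isomorphism. Because the maximal minors of a matrix are precisely the Plücker coordinates of its row space, the assignment $p_{i_1 i_2 i_3}\mapsto[\,i_1\,i_2\,i_3\,]$ annihilates the homogeneous ideal of $\Gr_3(\C^n)$ and so defines a graded ring homomorphism $\Phi\colon P_{3,n}\to \C[M_{3\times n}]^{\SL_3(\C)}$, which is surjective by the generation statement above. By Borel--Weil, $H^0(\Gr_3(\C^n),\mathcal O(d))\cong V_{\GL_n}(d\omega_3)^{*}$, so the two graded rings have the same finite dimension in every degree; a degreewise surjection between spaces of equal dimension is an isomorphism, forcing $\Phi$ to be an isomorphism. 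As a byproduct the Plücker quadrics already cut out the invariant ring, so the second fundamental theorem is subsumed by this Hilbert-function count rather than proved separately.

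I expect the genuine obstacle to be the Cauchy decomposition together with the multiplicity-one claim that each invariant graded piece is a single irreducible $\GL_n$-module; once that is in hand, everything else is bookkeeping. The generation statement could alternatively be obtained from Weyl's classical argument that the minors generate, or one could simply quote the classical $\SL_3(\C)$ first fundamental theorem; but the duality route has the advantage of delivering simultaneously the generation statement and the exact Hilbert function needed to preclude further relations.
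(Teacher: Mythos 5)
The paper offers no proof of this statement at all: it is quoted as the classical First Fundamental Theorem and used only to identify the Pl\"ucker algebra $P_{3,n}$ with the invariant algebra $\bigoplus_{\vec{r}} (V(r_1\omega_1)\otimes \cdots \otimes V(r_n\omega_1))^{sl_3(\C)}$, so there is nothing in the paper to compare your argument against; it stands or falls on its own. On its own it is essentially correct, and it is the standard modern route: the identification $\C[x_1,x_2,x_3]^{\otimes n}\cong \C[M_{3\times n}]$, the reduction from $sl_3(\C)$- to $\SL_3(\C)$-invariance by connectedness, the Cauchy/Howe decomposition, the observation that $V_{\GL_3}(\lambda)$ is $\SL_3(\C)$-trivial exactly when $\lambda=(d,d,d)$, and the resulting multiplicity-one statement that the degree-$3d$ invariants form the single irreducible $V_{\GL_n}(d\omega_3)$, from which generation by the $3\times 3$ minors follows since the subring they generate is a nonzero $\GL_n(\C)$-submodule in each degree.

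Two points need repair, though neither is fatal. First, your concluding Hilbert-function step assumes $\dim (P_{3,n})_d = \dim H^0(\Gr_3(\C^n),\mathcal{O}(d))$; since $P_{3,n}$ is the homogeneous coordinate ring of the Pl\"ucker embedding, that equality is projective normality of the embedding, a theorem of comparable depth that is usually proved by precisely this sort of argument, so invoking it outright is mildly circular. You only need the elementary inequality $\dim (P_{3,n})_d \le \dim H^0(\Gr_3(\C^n),\mathcal{O}(d))$ (degree-$d$ forms modulo the full vanishing ideal inject into sections), which together with the surjectivity of $\Phi$ forces equality in every degree; even simpler, injectivity of $\Phi$ is direct, because a form killed by $\Phi$ vanishes on all vectors of $3\times 3$ minors, and such vectors sweep out the entire affine cone over $\Gr_3(\C^n)$, hence the form already lies in the ideal you quotiented by. Second, the closing claim that the second fundamental theorem is ``subsumed'' overstates what the count gives: it identifies $\ker \Phi$ with the ideal of the Grassmannian, but it does not show that this ideal is generated by the Pl\"ucker quadrics, which is what the SFT asserts; that side remark is not needed for the statement at hand.
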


In terms of representation theory, this says that the Pl\"ucker algebra $P_{3, n}$ is isomorphic to the sum $\bigoplus_{\vec{r} \in \Z_{\geq 0}^n} (V(r_1\omega_1) \otimes \ldots \otimes V(r_n\omega_1))^{sl_3(\C)}.$  Results of \cite{M} imply that this algebra flatly degenerates to the semigroup algebra $\C[P_{\tree}]$ where $P_{\tree} \subset Q_{\tree}(sl_3(\C))$ is the subsemigroup of lattice points $w$ such that $\pi(w) \in \Delta_{sl_3(\C)}^n$ is always a tuple of multiples of the first fundamental weight $\omega_1.$  In this sense, the semigroup algebras $\C[P_{\tree}]$ are an analogue of the toric degenerations of $Gr_2(\C^n)$ constructed by Sturmfels and Speyer in \cite{SpSt}. 

\begin{equation}
\pi(w) = (r_1\omega_1, \ldots, r_n\omega_1)\\
\end{equation}

We also determine the structure of this semigroup.  We state our answer in terms of proper subtrees of $\tree,$ as above. 

\begin{theorem}\label{grass}
The minimal generators of the semigroup $P_{\tree}$ are in bijection with proper subtrees  $\tree' \subset \tree$ which have the property that for any two leaves $\ell_1, \ell_2 \in \tree',$ the number of trinodes on the unique path between $\ell_1$ and $\ell_2$ is odd.    The relations among these generators are generated by quadratics and cubics. 
\end{theorem}

We will show that a degeneration constructed by Sturmfels and Miller, \cite{MSt}  of $Gr_3(\C^n),$ to a toric variety corresponding to a semigroup of Gel'fand-Tsetlin patterns is realized by $P_{\tree_0}$, where $\tree_0$ is a special tree.

\section{Description of $Q_{\tree}(sl_m(\C))$}\label{quilt}

In this section we describe the affine semigroups $Q_{\tree}(sl_m(\C)).$
Elements of these semigroups are built out of triangular diagrams called Berenstein-Zelevinksy triangles.  

\begin{definition}[Berenstein-Zelevinsky triangles]
Let $BZ_3(m)$ be the semigroup of non-negative integer weightings of the vertices of the version of the diagram below with $m-1$ small triangles to an edge, which satisfy the condition that any pair of pairs $A, B$ and $E, D$ on the opposite side of a hexagon in the middle of the diagram satisfies $A + B = E+ D.$
\end{definition}

\begin{figure}[htbp]
\centering
\includegraphics[scale = 0.35]{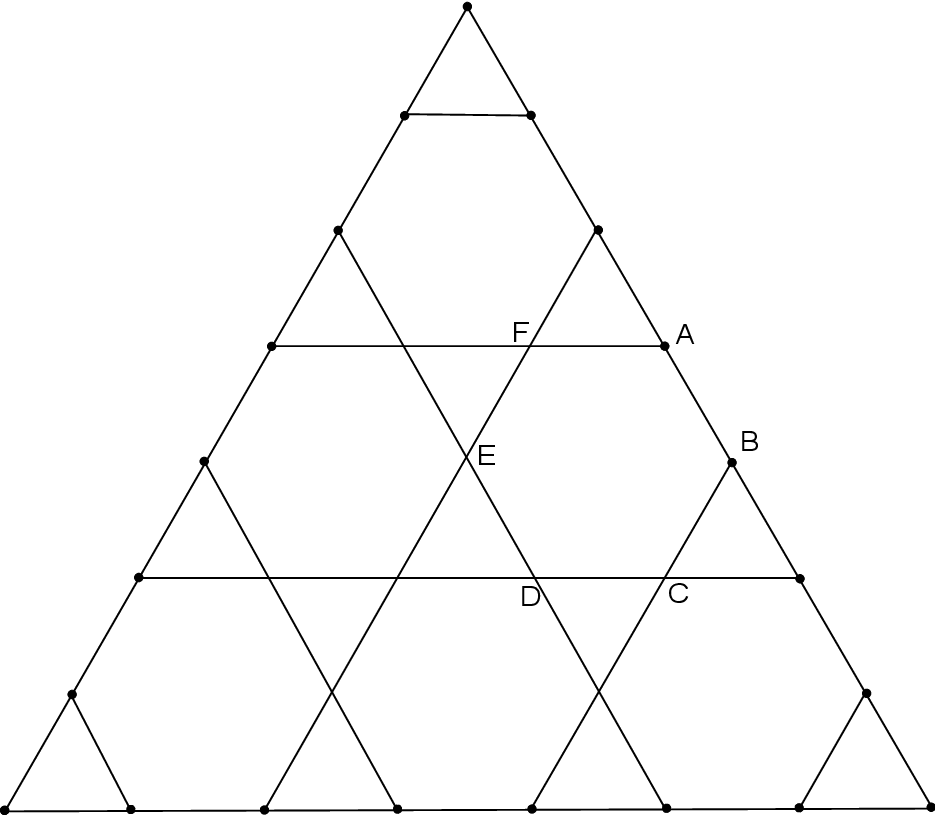}
\caption{The $BZ$ triangle diagram for $sl_5(\C)$}
\label{fig:BZ}
\end{figure}

We refer the reader to Figure \ref{fig:BZ} for an image of one of these patterns.   
 We orient the edges of the triangle counter-clockwise,  and label the entries along the three edges $(a_1, \ldots, a_{2(m-1)}), (b_1, \ldots, b_{2(m-1)}), (c_1, \ldots, c_{2(m-1)}).$   We define the boundary weights $\lambda_i$ as follows.

$$
\lambda_1 = (a_1+ a_2)\omega_1 + \ldots + (a_{2m-3} + a_{2m-2})\omega_{m-1}
$$

$$
\lambda_2 = (b_1+ b_2)\omega_1 + \ldots + (b_{2m-3} + b_{2m-2})\omega_{m-1}
$$

$$
\lambda_3 = (c_1+ c_2)\omega_1 + \ldots + (c_{2m-3} + c_{2m-2})\omega_{m-1}
$$

Here $\omega_i = (1, \ldots, 1, 0, \ldots, 0)$ is the $i-$th fundamental weight of $sl_m(\C).$ 

  We will utilize a dual description of this semigroup given by Knutson, Tao, and Woodward in terms of weighted graphs on the BZ triangle diagram called honeycombs.  These objects also appear in the work Gleizer and Postnikov on tensor product multiplicities, \cite{GP}.

\begin{figure}[htbp]
\centering
\includegraphics[scale = 0.4]{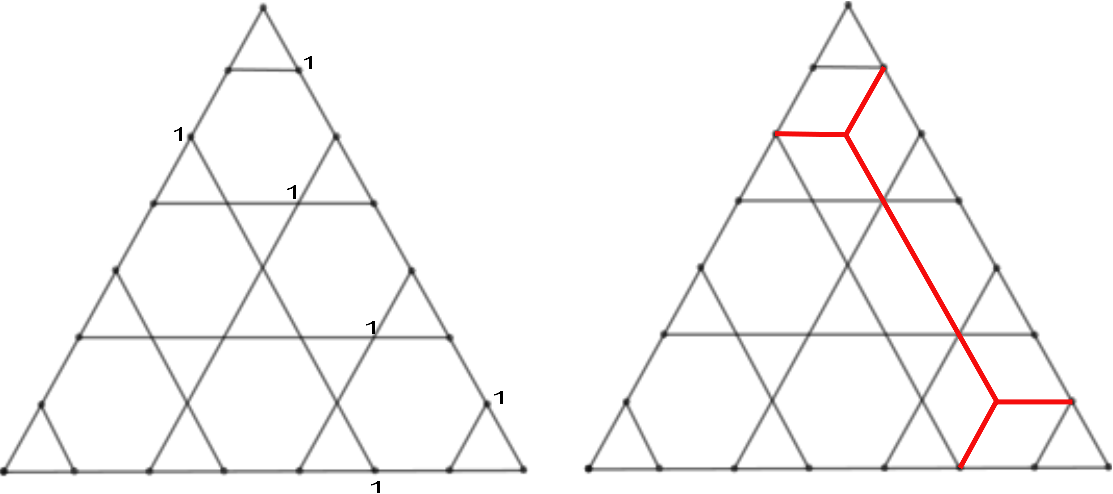}
\caption{A BZ triangle and its honeycomb graph. A lack of entry on a triangle indicates a $0$.}
\label{fig:HiveBZ}
\end{figure}

\noindent
A honeycomb is constructed from the BZ triangle by replacing each entry $a$ on a hexagon with an edge weighted $a$ connected to the center of the hexagon, see Figure \ref{fig:HiveBZ}.  A weight $b$ on a corner of the triangle is left to represent a vertex weighted by $b.$  The graphs coming from this contruction can be characterized as those graphs that are "balanced" at each vertex dual to the interior of a hexagon.  This means that the sum of the vectors pointing to the corners of the hexagon, scaled by the weights, is $0$.  Following the recipe given above, the BZ triangle/honeycomb in Figure \ref{fig:HiveBZ} has boundary weights $\lambda_1 = (1 + 0)\omega_1 + (0 + 1) \omega_4,$ $\lambda_2 = (1 +0) \omega_2,$  $\lambda_3 = (0 + 1) \omega_3$.  Addition is performed on two of these objects by taking their union and counting multiplicities. 

\begin{figure}[htbp]
\centering
\includegraphics[scale = 0.4]{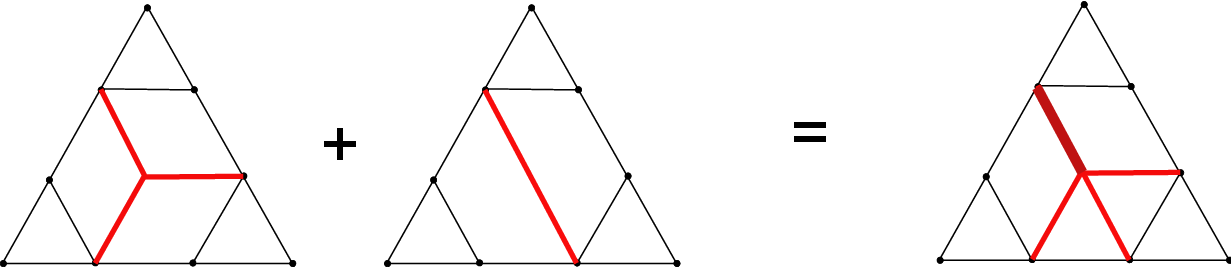}
\caption{Adding two honeycomb graphs, the thicker edge on the right hand side honeycomb indicates multiplicity.}
\label{fig:BZ3eq}
\end{figure}

Elements of $Q_{\tree}(sl_m(\C))$ are now constructed by pasting copies of the this triangle together over the structure of $\tree.$

\begin{figure}[htbp]
\centering
\includegraphics[scale = 0.5]{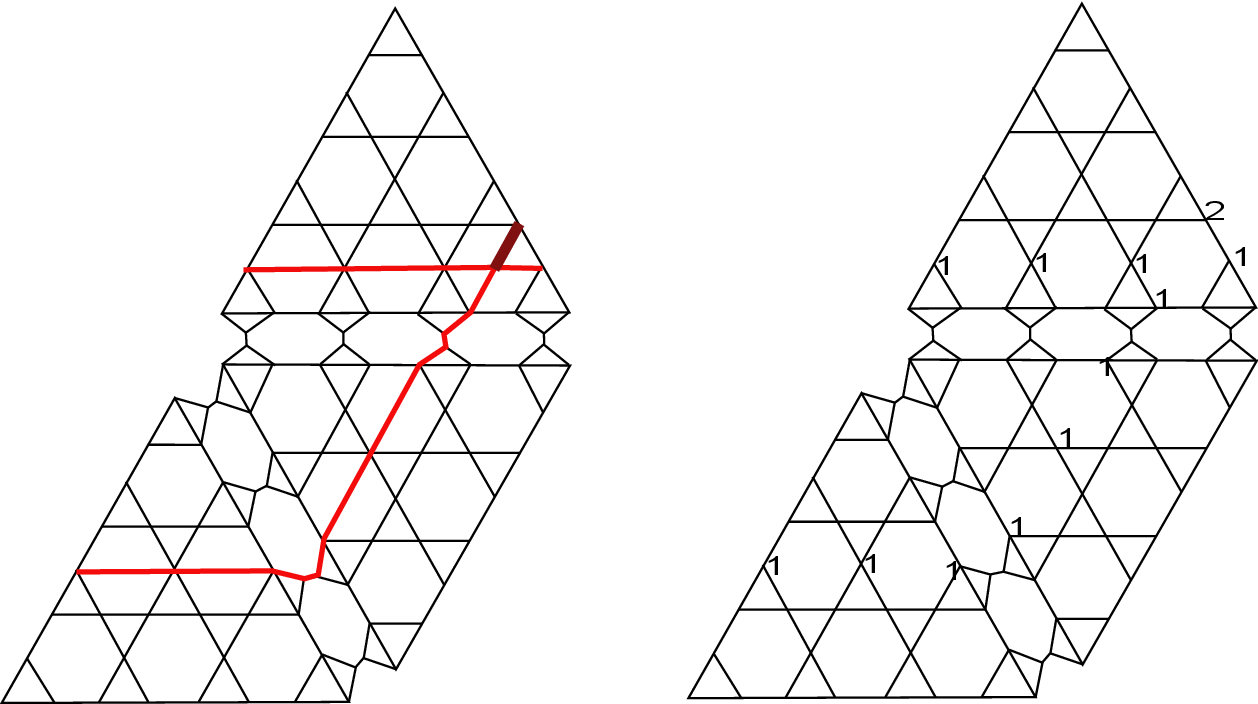}
\caption{A quilt for $sl_5(\C)$}
\label{fig:HIVEquilt}
\end{figure}

\begin{definition}
The semigroup $Q_{\tree}(sl_m(\C))$ is the set of weightings of the diagram obtained
from the tree $\tree$ such that

\begin{enumerate}
\item the weighting of each triangle is a BZ triangle,\\
\item for any pair of adjacent triangles meeting along common edges, with counter-clockwise oriented weights $(a_1, \ldots, a_{2m-2}), (A_1, \ldots, A_{2m-2}),$ we have $a_{2i-1} + a_{2i} = A_{2(m-i) -1} + A_{2(m-i)}.$
\end{enumerate}
\end{definition}

\begin{figure}[htbp]
\centering
\includegraphics[scale = 0.4]{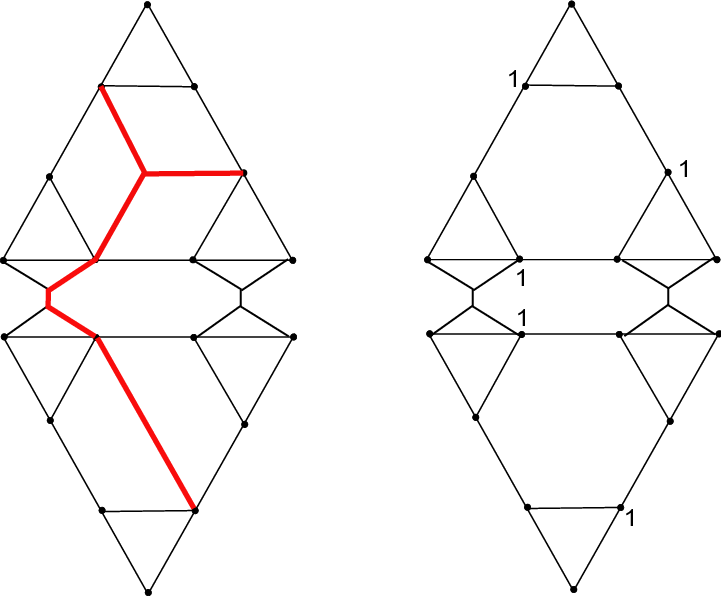}
\caption{A quilt for $sl_3(\C)$}
\label{fig:HIVEquilt2}
\end{figure}

We dualize this definition to form honeycomb graphs corresponding to elements of $Q_{\tree}(sl_m(\C)).$  Each weighting in $Q_3(sl_m(\C))$ is dualized to a honeycomb, and honeycombs which are compatible by condition $(2)$ above can be glued along the "stitching" added in between the BZ triangles, see Figures \ref{fig:HIVEquilt} and \ref{fig:HIVEquilt2}.  Notice in Figure \ref{fig:HIVEquilt2}, the left boundry entries on the top and bottom triangle both sum to $1$, and the right hand side entries both sum to $0$, ensuring that these triangles can be glued along the boundary.

\section{Generators and relations for $Q_{\tree}(sl_3(\C))$}

In this section we describe a presentation of $Q_{\tree}(sl_3(\C))$ for all $\tree$, in particular we prove Theorem \ref{main}.

\subsection{Generators}


First we describe the algebraic structure of $Q_3(sl_3(\C))$, see Figure \ref{fig:BZ3} for the generators of this semigroup.  The triangles $X$ and $Y$ each represent the determinant form in their respective triple tensor products of fundamental representations, and $P_{ij}$ represents the unqiue invariant in $V(\omega_1)\otimes V(\omega_2)$ in the positions $i, j$ respectively.

\begin{figure}[htbp]
\centering
\includegraphics[scale = 0.5]{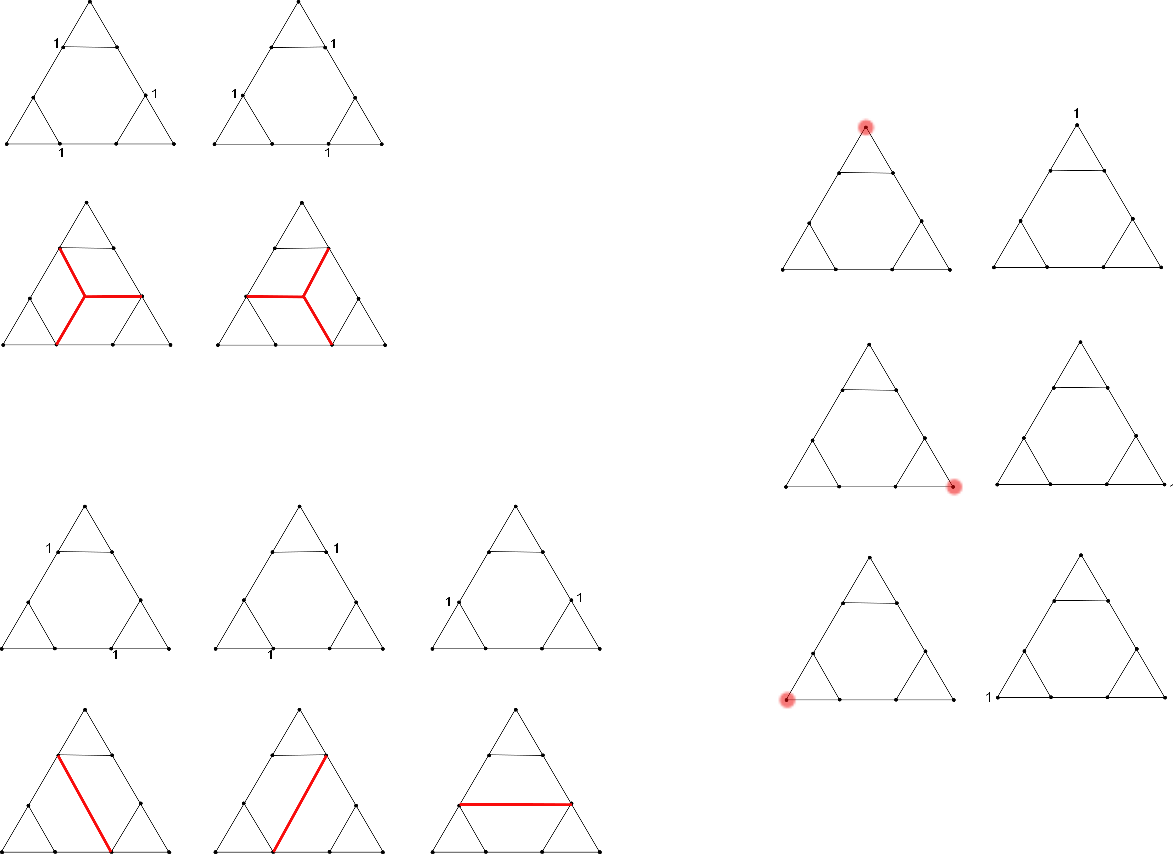}
\caption{Generators of $Q_3(sl_3(\C))$}
\label{fig:BZ3}
\end{figure}

\begin{proposition}
The semigroup $Q_3(sl_3(\C))$ is generated by $X, Y,$ and  $P_{ij}$ 
for $i, j \in \{ 1, 2, 3\}$ distinct, subject to the relation $XY = P_{12}P_{23}P_{31}.$
\end{proposition}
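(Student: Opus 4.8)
The plan is to treat $Q_3(sl_3(\C))$ as the explicit affine semigroup of $sl_3(\C)$ Berenstein--Zelevinsky triangles (equivalently honeycombs) and to prove the statement in three movements: realize the cone concretely, show the eight listed elements are a generating set, and then deduce that the relation ideal is principal from a rank count.

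First I would write $Q_3(sl_3(\C))$ down as a lattice cone. For $m=3$ each edge of the diagram carries $2(m-1)=4$ entries and there is a single central hexagon, so the semigroup is cut out of a nonnegative orthant by the one hexagon balancing equation. A dimension count then gives $7$: the six boundary coordinates $(r_i,s_i)$, where $\lambda_i = r_i\omega_1 + s_i\omega_2$, together with a single interior degree of freedom of the honeycomb. Next I would exhibit the eight candidates explicitly as honeycombs: $X$ and $Y$ as the two orientations of the trivalent vertex honeycomb (the determinant and its conjugate, with boundary $(\omega_1,\omega_1,\omega_1)$ and $(\omega_2,\omega_2,\omega_2)$), and each $P_{ij}$ as the straight-line honeycomb joining the $i$-th and $j$-th boundary legs (boundary $\omega_1$ in slot $i$ and $\omega_2$ in slot $j$). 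Superimposing diagrams and comparing weights at every vertex then verifies the identity $X+Y = P_{12}+P_{23}+P_{31}$ directly.

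The heart of the argument, and the step I expect to be the main obstacle, is that these eight elements generate $Q_3(sl_3(\C))$. Here I would argue by honeycomb decomposition: given an arbitrary balanced honeycomb $T$, I want to peel off an elementary piece, a line $P_{ij}$ or a vertex $X$ or $Y$, so that the remainder is again a nonnegative balanced honeycomb of strictly smaller total boundary weight, and then induct. The boundary data $\lambda_i = r_i\omega_1 + s_i\omega_2$ dictates which elementary pieces are available to subtract, and the content of the step is to use the balancing (hexagon) condition to guarantee that some admissible subtraction keeps all vertex weights nonnegative while preserving balance. Equivalently, one may close this step by a direct Hilbert-basis computation for the explicit $7$-dimensional cone, confirming that no generator beyond the listed eight is needed and that each of the eight is irreducible in the semigroup; because the cone is small this is a finite check, but the conceptual honeycomb decomposition is what I would aim to record.

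Finally, granting generation, the relations follow formally. Since the eight generators generate the full $7$-dimensional cone, they span a rank-$7$ sublattice, so the lattice of relations among them, namely the kernel of $\Z^8 \to \Z^7$, has rank $8-7 = 1$. The vector $e_X + e_Y - e_{P_{12}} - e_{P_{23}} - e_{P_{31}}$ lies in this kernel by the identity verified above, and it is primitive, so it generates the kernel. Hence the toric ideal is the saturation of the principal lattice ideal attached to the corresponding binomial $XY - P_{12}P_{23}P_{31}$; but the two monomials of this binomial are coprime, so it is irreducible and already generates a prime, saturated ideal. Therefore the defining ideal is exactly $(XY - P_{12}P_{23}P_{31})$, which completes the proof.
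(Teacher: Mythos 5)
Your proposal is correct in substance, but it takes a genuinely different route from the paper: the paper's entire proof of this proposition is a one-line appeal to a polyhedral software computation (4ti2), which produces the Hilbert basis and the Markov basis of the cone simultaneously. Your argument replaces the relations half of that computation with actual mathematics, and that is a real gain: once the eight generators are known, the eight vectors span a rank-$7$ lattice (three corner coordinates plus the $4$-dimensional hexagon space; note the hexagon balancing amounts to \emph{two} independent scalar equations, so $9-2=7$, consistent with your boundary-plus-interior count), the kernel of $\Z^8 \to \Z^7$ is a saturated rank-one lattice, and $e_X+e_Y-e_{P_{12}}-e_{P_{23}}-e_{P_{31}}$ is primitive, hence generates it; the toric ideal is then the saturation of the corresponding principal binomial ideal. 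None of this reasoning appears in the paper, which buys certainty cheaply but no insight; your route explains \emph{why} a single relation suffices.

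Two repairs are needed. First, the generation step, which you rightly call the heart, is only a plan in your write-up, and your stated fallback (a direct Hilbert-basis computation) is precisely the paper's proof, so as written you have not improved on it there. The peeling argument can in fact be completed in a few lines: for $sl_3(\C)$ all nine entries of the BZ triangle lie on its boundary; write the hexagon entries cyclically as $(p_1,\dots,p_6)$ and the corners as $A,B,C$. The hexagon conditions are equivalent to $p_1-p_4 = p_5-p_2 = p_3-p_6 =: d$. Up to matching labels, the corner unit vectors are $P_{13},P_{21},P_{32}$, while $X=(1,0,1,0,1,0)$, $Y=(0,1,0,1,0,1)$, $P_{12}=(1,0,0,1,0,0)$, $P_{31}=(0,1,0,0,1,0)$, $P_{23}=(0,0,1,0,0,1)$ on the hexagon. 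If $d\ge 0$, then
\begin{equation*}
(p_1,\dots,p_6) - dX = (p_4,p_2,p_6,p_4,p_2,p_6) = p_4P_{12}+p_2P_{31}+p_6P_{23},
\end{equation*}
and symmetrically with $Y$ when $d\le 0$; adding $AP_{13}+BP_{21}+CP_{32}$ finishes the decomposition and, as a byproduct, verifies $X+Y=P_{12}+P_{23}+P_{31}$. Second, the inference ``the two monomials are coprime, so it is irreducible'' is false in general: $X^2-Y^2$ has coprime terms and factors. What saves you is that $XY-P_{12}P_{23}P_{31}$ is multilinear: viewed as a polynomial of degree one in $X$, its coefficients $Y$ and $P_{12}P_{23}P_{31}$ have no common factor, so it is irreducible by Gauss's lemma (equivalently, its exponent vector is primitive, so its Newton polytope is a primitive segment admitting no nontrivial Minkowski splitting). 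With that patched, the chain irreducible $\Rightarrow$ prime $\Rightarrow$ equal to its saturation (the ideal contains no variable) $\Rightarrow$ equal to the toric ideal is valid, and your conclusion stands.
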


\begin{proof}
This is easily established with a software package which handles polyhedral cones, such as 4ti2 \cite{4ti2}. 
\end{proof}

This result says that $Q_3(sl_3(\C))$ is generated by triangles with only fundamental
weights along their edges.  As the triangle gets larger, the diagram admits increasingly chaotic graphs which cannot be decomposed, this is illustrated in Figure \ref{fig:BZcounter2}.  Generation results like Theorem \ref{main} cannot hold at larger scale either, Figure \ref{fig:BZcounter} illustrates this. Notice that the bottom BZ triangle is a generator and the top is composite, yet no factorization of the top can be extended to the bottom.  This example can be made for  arbitrary $m \geq 4.$

\begin{figure}[htbp]
\centering
\includegraphics[scale = 0.35]{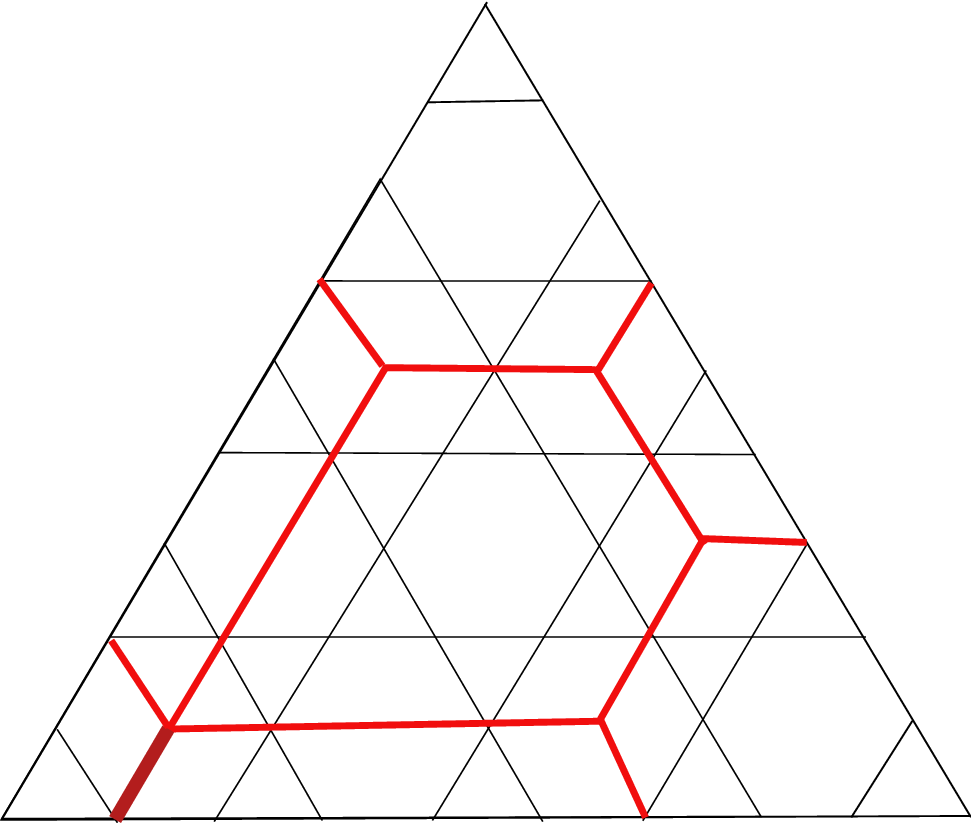}
\caption{An indecomposable element of $Q_3(sl_6(\C))$ with an edge of weight $2$}
\label{fig:BZcounter2}
\end{figure}

\begin{figure}[htbp]
\centering
\includegraphics[scale = 0.35]{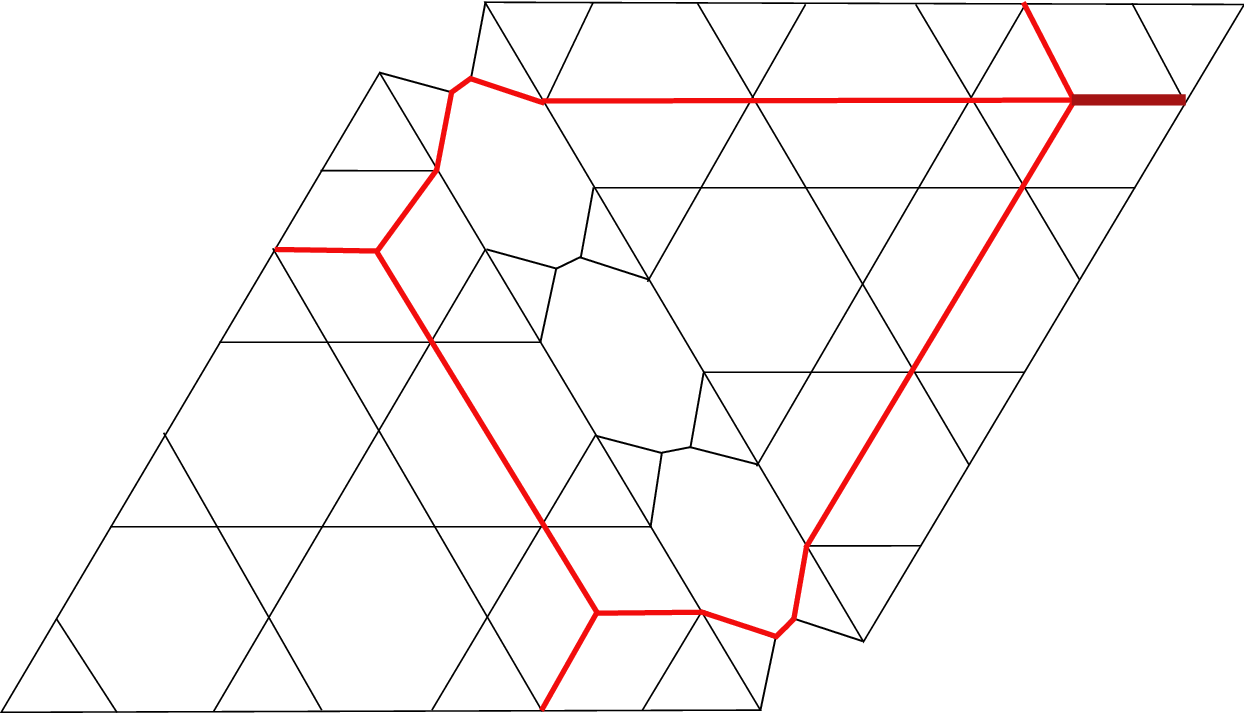}
\caption{An indecomposable quilt for $sl_5(\C)$}
\label{fig:BZcounter}
\end{figure}

Now we extend this generation result to $BZ_{\tree}(3)$ for any tree $\tree.$  The first part of Theorem \ref{main} follows from the next lemma. 

\begin{lemma}
Let $S$ be a semigroup with a map $\partial: S \to \Z_{\geq 0}^2,$ and let $\pi_1: Q_3(sl_3(\C)) \to \Z_{\geq 0}^2$ be the projection onto one of the edges.  The fiber product semigroup
$S \times_{\Z_{\geq 0}^2} Q_3(sl_3(\C))$ defined by these maps is generated by the elements $(s, w)$ such that $s$ is a generator of $S$.
\end{lemma}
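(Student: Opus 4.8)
The plan is to reduce the statement to a ``peeling'' property of $\pi_1$ and then transport a chosen factorization from the $S$-side to the $Q_3(sl_3(\C))$-side. I write all semigroup operations additively, so the relation of the preceding proposition becomes $X+Y=P_{12}+P_{23}+P_{31}$, and $\pi_1$ records the weight $\lambda_1$ in fundamental-weight coordinates, i.e. $\pi_1(w)=(\text{coeff. of }\omega_1,\ \text{coeff. of }\omega_2)\in\Z_{\geq 0}^2$. Reading off the generators one finds $\pi_1(X)=\pi_1(P_{12})=\pi_1(P_{13})=(1,0)$, $\pi_1(Y)=\pi_1(P_{21})=\pi_1(P_{31})=(0,1)$, and $\pi_1(P_{23})=\pi_1(P_{32})=(0,0)$.

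The heart of the argument is the following peeling claim: if $w\in Q_3(sl_3(\C))$ has $\pi_1(w)=(r,s)$ with $r\geq 1$, then some generator $g\in\{X,P_{12},P_{13}\}$ (so $\pi_1(g)=(1,0)$) satisfies $w-g\in Q_3(sl_3(\C))$, and symmetrically with a generator mapping to $(0,1)$ when $s\geq 1$. I would prove this directly from the preceding proposition: fix any expression $w=\alpha X+\beta Y+\sum_{ij}\gamma_{ij}P_{ij}$ as a non-negative integer combination of generators. Since $\pi_1$ is a well-defined additive function, the value $\pi_1(w)$ is independent of the chosen expression; in particular its first coordinate equals $\alpha+\gamma_{12}+\gamma_{13}=r$. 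Thus $r\geq 1$ forces one of $\alpha,\gamma_{12},\gamma_{13}$ to be positive, and subtracting the corresponding generator merely decrements that coefficient, leaving a non-negative combination, which is again an element of $Q_3(sl_3(\C))$.

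Iterating the peeling lets me lift any additive decomposition of $\pi_1(w)$ in $\Z_{\geq 0}^2$. Given $\pi_1(w)=v_1+\cdots+v_k$ with $v_i=(p_i,q_i)$ and $k\geq 1$, I peel off $p_i$ generators mapping to $(1,0)$ and $q_i$ mapping to $(0,1)$, collect them as $w_i$, and fold the residual $(0,0)$-content — the $P_{23},P_{32}$ summands, which peeling never touches — into $w_1$; then $\pi_1(w_i)=v_i$ and $w=w_1+\cdots+w_k$. Every individual peel is legitimate because at each stage the coordinate being decremented is still positive: across the whole process I remove exactly $r=\sum_i p_i$ generators of type $(1,0)$ and $s=\sum_i q_i$ of type $(0,1)$, and the two kinds do not interfere. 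To finish, take any $(s,w)$ in the fiber product, so $\partial(s)=\pi_1(w)$; write $s=s_1+\cdots+s_k$ with each $s_i$ a generator of $S$; and apply the lifting to $\pi_1(w)=\sum_i\partial(s_i)$ to obtain $w=\sum_i w_i$ with $\pi_1(w_i)=\partial(s_i)$. Then $(s,w)=\sum_i(s_i,w_i)$, and each $(s_i,w_i)$ lies in $S\times_{\Z_{\geq 0}^2}Q_3(sl_3(\C))$ with $s_i$ a generator of $S$, as required.

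The step demanding the most care is the bookkeeping in the peeling, and relatedly the fiber of $\pi_1$ over $0$. Because $P_{23},P_{32}$ both map to $(0,0)$, they impose no constraint in the lifting and may be absorbed into any nonzero $w_i$; the only elements of the fiber product not reached by the scheme above are those with $s=0$ (forcing $k=0$), which lie entirely over this zero fiber and must be accounted for by the two generators of $\pi_1^{-1}(0)$ (or absorbed by the empty-sum convention for the monoid). Verifying that the peeling order can always be arranged so that no coordinate goes negative is precisely what the intrinsic additivity of $\pi_1$ guarantees, and the remainder of the argument is then formal.
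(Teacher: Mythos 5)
Your proof is correct and takes essentially the same route as the paper's: fix generator decompositions of both components, use the fact that every element of $\Z_{\geq 0}^2$ decomposes uniquely into copies of $(1,0)$ and $(0,1)$, and regroup the $Q_3(sl_3(\C))$-generators to match the $\partial(s_i)$ --- your ``peeling'' is just this regrouping carried out one generator at a time. If anything you are more careful than the paper, whose proof asserts that every generator of $Q_3(sl_3(\C))$ has $\pi_1$-image in $\{(1,0),(0,1)\}$ and thereby silently ignores $P_{23}$ and $P_{32}$ (which map to $(0,0)$) as well as the resulting elements $(0,w)$ lying over the zero fiber --- precisely the edge cases you flag and absorb explicitly.
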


\begin{proof}
Let $(x, z)$ be any element in the fiber product $S \times_{\Z_{\geq 0}^2} Q_3(sl_3(\C)).$
Let $x = s_1 + \ldots + s_k$ be a decomposition of $x \in S$, and $z = w_1 + \ldots + w_n$ 
be a decomposition of $z$ into generators of $Q_3(sl_3(\C))$.  The image vectors
$\partial(x_1), \ldots, \partial(x_n) \in \Z_{\geq 0}^2$ and $\pi_1(w_1), \ldots, \pi_1(w_n)$ must satisfy

\begin{equation}
\sum \partial(x_i) = \sum \pi_1(w_j)\\
\end{equation}

\noindent
with each $\pi_1(w_j) \in \{(0, 1), (1, 0)\}.$   Since each element of $\Z_{\geq 0}^2$ uniquely decomposes into a sum of the generators $(0, 1), (1, 0),$ we can group
the elements $w_j$ into disjoint sets such that $\partial(x_i) = \sum \pi_1(w_j(i)).$
But then it follows that $(x_i, \sum w_j(i)) \in S_{\Z_{\geq 0}^2} Q_3(sl_3(\C)).$
\end{proof}

An induction argument now shows that $Q_{\tree}(sl_3(\C))$ is generated by quilts
which have the property that the weighting obtained by any restriction to an internal triangle
is either $0$ or a generator of $Q_3(sl_3(\C))$.    We define the support of a generator $w \in Q_{\tree}(sl_3(\C))$ to be the set of edges $e \in E(\tree)$
which are assigned a non-zero weighting by $w.$.  If the support of $w$ has disjoint connected components, it follows easily that $w$ can be factored.
If $w$ has connected support, the corresponding subtree of $\tree$ must be proper, as defined in the introduction. 

\begin{lemma}
For any proper subtree $\tree' \subset \tree,$ there are precisely two generators $w \in Q_{\tree}(sl_3(\C))$ with support $\tree'$. 
\end{lemma}

\begin{proof}
We consider the case $Q_3(sL_3(\C))$, where the graph is a single trinode.  If the support is the whole trinode, then the weighting can be either $X$ or $Y$, and the weight data at a leaf determines which of these it must be. Each pair of edges $\{i, j\}$ likewise has precisely two weightings which carry $\{i, j\}$ as their support, determined by the weight at either edge.  The general lemma follows from this by induction. 
\end{proof}

The number of proper subtrees of $\tree$ is exactly $2^n - n - 1,$ as this is the number of subsets of the set of leaves, modulo singletons and the empty set.   It follows that there are $2(2^n - n - 1)$ minimal generators for each $Q_{\tree}(sl_3(\C)),$ this proves Theorem \ref{main2}.

\subsection{Relations}

We now describe a generating set for the ideal $I_{\tree}(sl_3(\C))$ that vanishes on these generators in $\C[Q_{\tree}(sl_3(\C))].$

\begin{definition}
For a vertex $v \in V(\tree)$ we let $\tree_1, \tree_2, \tree_3$ be the connected components
of the forest obtained by cutting along the edges connected to $v.$  
\end{definition}

\begin{figure}[htbp]
\centering
\includegraphics[scale = 0.4]{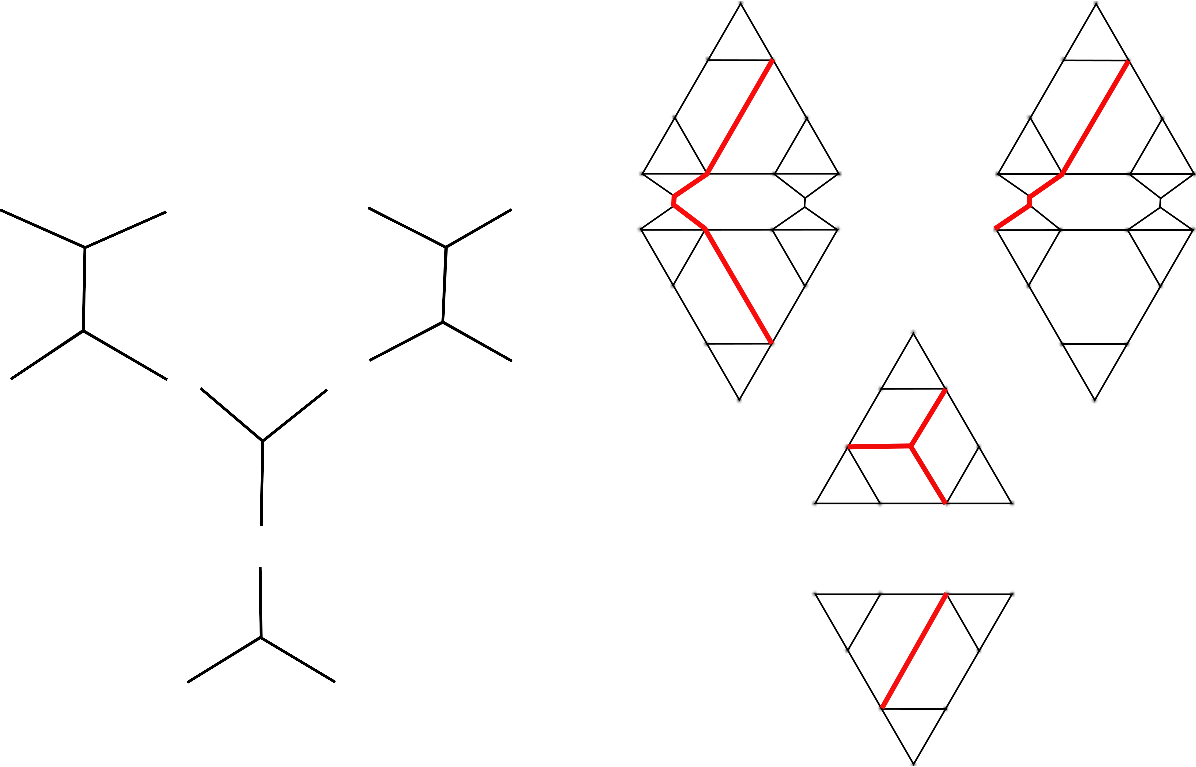}
\caption{Induced quilts on the components $\tree_1, \tree_2, \tree_3$}
\label{fig:triweight}
\end{figure}

For any weighting or graph $w \in Q_{\tree}(sl_3(\C))$ we obtain elements $w_3 \in Q_3(sl_3(\C)),$ and  $w_{\tree_i} \in Q_{\tree_i}(sl_3(\C))$ $i = 1, 2, 3$ by restriction.  Note that if $[w_1 \ldots w_k] - [v_1 \ldots v_j] \in I_{\tree}(sl_3(\C))$ then the restrictions of this word to $Q_3(sl_3(\C))$ and the $Q_{\tree_i}(sl_3(\C))$ will be in $I_3(sl_3(\C))$ and  $I_{\tree_i}(sl_3(\C))$ respectively.  We consider the relations depicted in Figures \ref{fig:BZquilteq1} and \ref{fig:BZquilteq2}.  In Figure \ref{fig:BZquilteq1},  $A, B, C, D$ are meant to represent weightings on the quilts on either side of the depicted edge.  If $A$ and $B$ are both compatible with $C$ and $D$ along this edge, a swap can be made.   Similarly, the main relation of $Q_3(sl_3(\C))$ can be extended to any $Q_{\tree}(sl_3(\C))$ at any vertex $v \in V(\tree)$ for any compatible $A, B, C, D, E, F, G, H, I$ for the appropriate quilts on $Q_{\tree_i}(sl_3(\C))$, as depicted in Figure \ref{fig:BZquilteq2}.  We now complete the proof of Theorem \ref{main}

\begin{figure}[htbp]
\centering
\includegraphics[scale = 0.37]{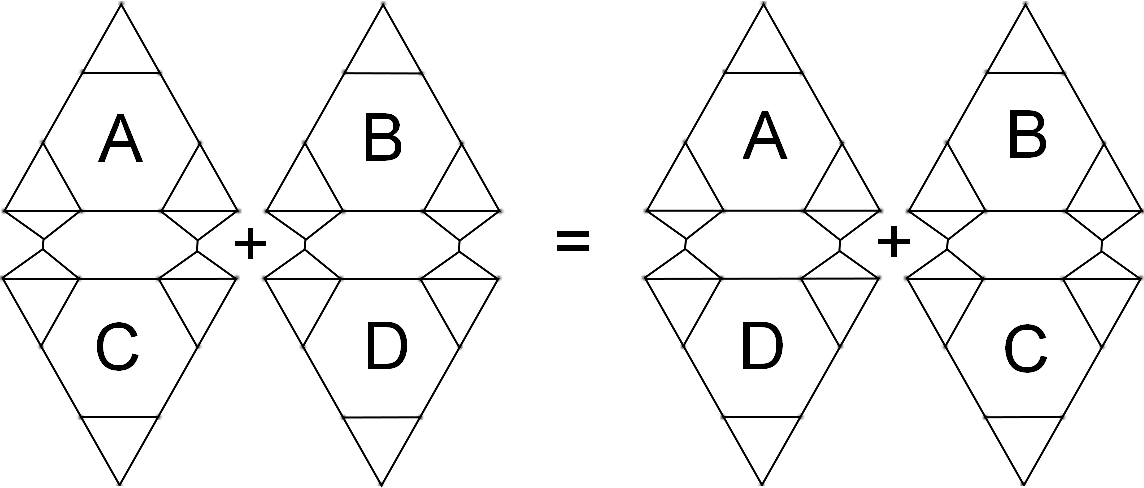}
\caption{The swap relation on an edge}
\label{fig:BZquilteq1}
\end{figure}

\begin{figure}[htbp]
\centering
\includegraphics[scale = 0.55]{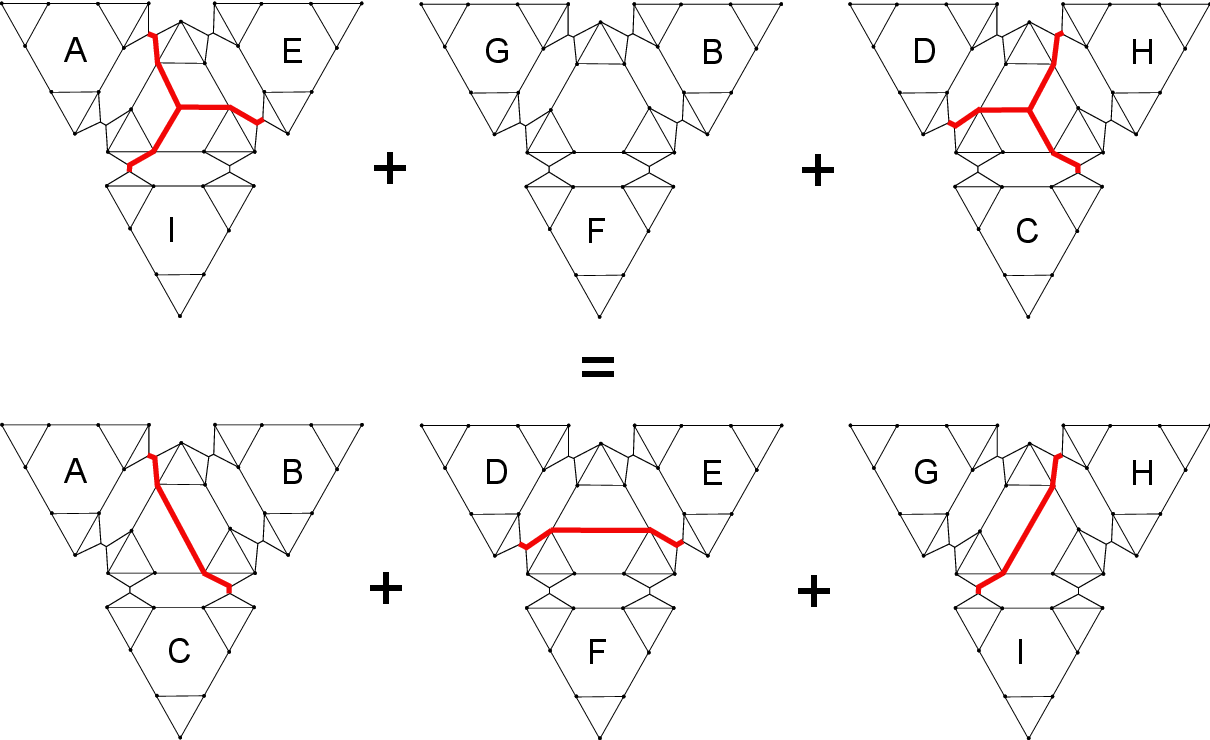}
\caption{Extension of the main relation of $Q_3(sl_3(\C))$}
\label{fig:BZquilteq2}
\end{figure}

\begin{proposition}\label{relt}
The two types of relations depicted in Figures \ref{fig:BZquilteq1} and \ref{fig:BZquilteq2}
suffice to generate $I_{\tree}(sl_3(\C)).$
\end{proposition}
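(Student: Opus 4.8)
The plan is to treat $I_{\tree}(sl_3(\C))$ as the toric ideal of the affine semigroup $Q_{\tree}(sl_3(\C))$ and to verify that the two proposed families of relations form a Markov basis. Since this ideal is generated by binomials $[w_1\cdots w_k] - [v_1\cdots v_j]$ recording two factorizations of a common lattice point into the generators of Theorem \ref{main2}, it suffices to show that any two such factorizations of the same $w \in Q_{\tree}(sl_3(\C))$ can be connected by a sequence of moves, each of which is one of the relations in Figures \ref{fig:BZquilteq1} and \ref{fig:BZquilteq2}. I would carry this out by induction on the number of trinodes of $\tree$. The base case is $\tree$ a single trinode, where the preceding Proposition already identifies $I_3(sl_3(\C))$ as generated by the single binomial $XY - P_{12}P_{23}P_{31}$; this is exactly the extended main relation of Figure \ref{fig:BZquilteq2} at the unique vertex, with all legs trivial.

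For the inductive step I would fix an internal vertex $v$ and use the fiber-product structure already exploited in the generation argument: restriction along the three edges at $v$ realizes $Q_{\tree}(sl_3(\C))$ as the fiber product of $Q_3(sl_3(\C))$ with the three subtree semigroups $Q_{\tree_i}(sl_3(\C))$ over copies of $\Z_{\geq 0}^2$. The governing principle, which I would isolate as a lemma, is that the toric ideal of such a fiber product is generated by the relations pulled back from each factor together with the binomials that re-distribute matching boundary data along the glued edges. The inherited part is handled by induction: as noted in the excerpt, the restriction of any word in $I_{\tree}(sl_3(\C))$ to the central triangle lies in $I_3(sl_3(\C))$ and its restriction to each $\tree_i$ lies in $I_{\tree_i}(sl_3(\C))$, and conversely both proposed relation types are closed under restriction and extension — a swap on an edge of $\tree_i$ is a swap on an edge of $\tree$, and an extended main relation at a vertex of $\tree_i$ is one at that vertex of $\tree$. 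Thus the inductive hypothesis expresses all inherited relations in terms of the two allowed moves.

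The crux is the re-matching contribution along the edges at $v$. Given two factorizations of $w$, I would first apply the inductive hypothesis separately to their restrictions to the central triangle and to each $\tree_i$, so that after a sequence of allowed moves the two factorizations agree upon restriction to every piece. What can still differ is the pairing of the strands crossing each edge at $v$: a generator meeting the edge contributes a single fundamental strand of type $(1,0)$ or $(0,1)$, and a factorization amounts to a matching of the equal-type strands coming from the central triangle with those coming from the subtree. The swap relation of Figure \ref{fig:BZquilteq1} realizes precisely a transposition of two such matchings, and since $\Z_{\geq 0}^2$ is free on $(1,0)$ and $(0,1)$ — the same freeness exploited in the generation lemma — any two matchings of the same boundary datum are connected by such transpositions. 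Applying these swaps along the three edges at $v$ reconciles the remaining discrepancy, completing the induction.

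I expect the main obstacle to be the bookkeeping in this last step: one must verify that the moves used to align the restriction to one component can be carried out without disturbing compatibility along the shared edges, i.e. that the factorization graph of a fiber of the fiber product is connected once the graphs of the individual factors are connected and edge-swaps are adjoined. Making this precise requires showing that the inductive alignment on each $Q_{\tree_i}(sl_3(\C))$ can always be arranged to fix the boundary weight on the edge joining $\tree_i$ to $v$, so that the central-triangle factorization is never forced out of $Q_3(sl_3(\C))$ during the reduction. Once this independence is established, the degree bounds of Theorem \ref{main} follow: the swap relations are quadratic and the extended main relation is cubic.
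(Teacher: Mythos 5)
Your proposal is correct in outline and runs on the same engine as the paper's proof --- induction on the tree, restriction of a relation to the pieces of a decomposition, lifting of moves back to $\tree$, and reconciliation of the residual matching ambiguity by the swap relation of Figure \ref{fig:BZquilteq1} --- but your decomposition is genuinely different. The paper cuts off a single cherry: it selects a trinode $T$ adjacent to two leaves, writes $\tree = T \cup \tree'$, and works with the binary splitting along the one separating edge $e$. That choice buys a clean sequential argument: first the restriction to $\tree'$ is aligned by induction, each move being liftable to a relation on $\tree$ that is trivial over $T$ because it does not change the list of values along $e$; then the restriction to $T$ is aligned using only the cubic relation $W_1W_2W_3 = Y_1Y_2$, lifted trivially over $\tree'$ for the same reason; what remains is absorbed by a single application of the swap relation along $e$. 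Your splitting at an arbitrary internal vertex $v$ into the central triangle and three subtrees is more symmetric and makes the fiber-product structure (which the paper only uses for the generation statement) explicit, but it forces you to manage three cut edges simultaneously and to prove a stronger connectivity lemma: that the factorization graph of a fiber of a multi-fold fiber product over copies of $\Z_{\geq 0}^2$ becomes connected once the factors' graphs are connected and edge swaps are adjoined. That lemma does hold, by the matching/transposition argument you sketch (swaps are available exactly between factors whose parts carry the same fundamental weight on the cut edge, and matchings with fixed type data are connected by transpositions), and the obstacle you flag --- aligning one component without disturbing compatibility at the cut edges --- is precisely the point the paper settles by observing that both types of moves preserve, factor by factor, the list of values along the separating edge, so each move lifts to one fixing the other side pointwise. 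In short, both routes work: the paper's cherry decomposition minimizes the bookkeeping your version must carry, while your version isolates a reusable fiber-product principle that would apply verbatim to other gluings.
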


\begin{proof}
We select a trinode connected to two leaves $T \subset \tree$.  We write $\tree = T \cup \tree'$ for $\tree'$ the complement of $T$.   Suppose that $c^1\ldots c^n = d^1 \ldots d^m$ is a relation
among generators $c^i, d^j \in Q_{\tree}(sl_3(\C)).$   We consider the restriction of the words
$c_T^1 \ldots c_T^n$; $d_T^1 \ldots d_T^m$ and $c_{\tree'}^1 \ldots c_{\tree'}^n$; $d_{\tree'}^1 \ldots d_{\tree'}^m$ to $T$ and $\tree'$ respectively.

 By induction, there is a sequence of modifications of the word $c_{\tree'}^1 \ldots c_{\tree'}^n$
by the above relations which result in the word $d_{\tree'}^1 \ldots d_{\tree'}^m.$  Note that each step in this sequence does not change the list of values along the edge $e$ which separates $T$ from $\tree'.$  As a consequence, each modification can be extended to a relation between weightings of $\tree$ which are trivial over $T.$  The result is a word $g_{\tree}^1 \ldots g_{\tree}^k$ which differs from $d_{\tree}^1 \ldots d_{\tree}^m$ only over the trinode $T.$  

Now, since the relation $W_1W_2W_3 = Y_1Y_2$  does not change the list of values along the edge $e,$ we may use it to transform $g_{T}^1 \ldots g_{T}^k$ to $d_{T}^1 \ldots d_{T}^m.$   Furthermore, since the list of boundary values along $e$ does not change, each application of this relation can be lifted to relations on weightings of $\tree$ which are trivial over $\tree'.$  The resulting word now differs from $d^1_{\tree}\ldots d^m_{\tree}$ only up to an application of the relation in Figure \ref{fig:BZquilteq1}. 
\end{proof}

\section{Generators and relations for $P_{\tree}$}

In this section we  prove Theorem \ref{grass} and relate a particular instance
of our quilt semigroups to a semigroup of Gel'fand-Tsetlin patterns.  Define $F: Q_{\tree}(sl_3(\C)) \to \R$ to be the function obtained by first projecting to the boundary weightings in $(\Z_{\geq 0}^2)^n,$ then summing all of the second fundamental weight components. This function is non-negative on all of $Q_{\tree}(sl_3(\C)).$ From our discussion in the introduction, the semigroup $P_{\tree}$ can be identified with the $0$ set of $F.$   It follows that $P_{\tree}$ is face of $Q_{\tree}(sl_3(\C))$.  As a result, in order to prove Theorem \ref{grass}, we need only find the generators of $Q_{\tree}(sl_3(\C))$ which lie in $P_{\tree}.$

\begin{lemma}
A minimal generator of $Q_{\tree}(sl_3(\C))$ is in $P_{\tree}$ if and only if its associated
proper tree $\tree'$ has the property that the unique path between any two leaves $\ell_1, \ell_2 \in \tree'$ contains an odd number of trivalent vertices. 
\end{lemma}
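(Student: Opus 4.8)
The plan is to characterize the generators of $Q_{\tree}(sl_3(\C))$ lying in $P_{\tree}$ by tracking how the building blocks from $Q_3(sl_3(\C))$ contribute to the second-fundamental-weight content, and then show this vanishing is equivalent to the parity condition on paths. Recall from the previous section that a minimal generator $w$ has connected support equal to a proper subtree $\tree' \subset \tree$, and that $w$ is obtained by assigning to each internal trinode of $\tree'$ a generator of $Q_3(sl_3(\C))$, namely one of $X, Y, P_{ij}$. The function $F$ sums the $\omega_2$-components of the boundary weights, so $w \in P_{\tree}$ exactly when every \emph{leaf} edge of $\tree'$ carries a pure $\omega_1$ weight. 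The strategy is therefore to understand, trinode by trinode, which $Q_3$-generators produce $\omega_1$ versus $\omega_2$ on each of their three edges, and to propagate this constraint across $\tree'$.

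\medskip

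\noindent \textbf{Key steps.} First I would record the edge-weight types of each generator of $Q_3(sl_3(\C))$: the determinant triangles $X$ and $Y$ place $\omega_1$ (respectively $\omega_2$, or vice versa) uniformly on their three edges, while each $P_{ij}$ places $\omega_1$ on one edge and $\omega_2$ on the other, with the third being trivial. The point is that at every trinode one may assign a ``color'' in $\{\omega_1, \omega_2\}$ to each incident nontrivial edge, and the local compatibility at each trinode forces a rule relating the colors of the three edges. Second, I would show that the gluing condition (2) from the definition of $Q_{\tree}(sl_3(\C))$ — which identifies $\omega_1 \leftrightarrow \omega_{m-1} = \omega_2$ across a shared edge via the index reversal $i \mapsto m-i$ — means that a $\omega_1$-colored edge on one side of a stitching matches a $\omega_2$-colored edge on the other. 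Thus \emph{the color flips each time we cross an internal edge}, i.e. each time we pass through a trinode along a path. Third, I would trace a path from a leaf $\ell_1$ to a leaf $\ell_2$ in $\tree'$ and observe that the leaf edge colors at the two ends are equal precisely when the number of trinodes (hence color-flips) between them is even. Since $w \in P_{\tree}$ requires \emph{all} leaf edges to be $\omega_1$-colored, and since the two global weightings of $\tree'$ differ by swapping $\omega_1 \leftrightarrow \omega_2$ everywhere, such a consistent all-$\omega_1$ assignment exists exactly when any two leaves are separated by an odd number of trinodes (so that the flips bring a $\omega_2$ back to $\omega_1$ — here I must be careful to fix the convention so the endpoints agree).

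\medskip

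\noindent \textbf{Main obstacle.} The delicate point is the bookkeeping of the parity convention, namely reconciling the index-reversal in gluing condition (2) with the counting of trivalent vertices \emph{on} the path versus \emph{interior} edges traversed. I expect the hard part to be verifying that the color-flip rule is exactly a flip (and not occasionally a non-flip) at a generic trinode, since at a trinode where the path enters one edge and exits another, I must confirm that the $Q_3$-generator realizing the incoming $\omega_1$ forces $\omega_2$ outgoing — this is where the structure of the $P_{ij}$ generators, which carry one edge of each color, does the work, while ruling out that an $X$ or $Y$ block (uniform color) could occur along an interior edge of the path without violating the all-$\omega_1$ leaf condition. Once the flip rule is pinned down, the parity statement follows immediately, and the two admissible global weightings correspond to the two choices of color on a fixed root edge.
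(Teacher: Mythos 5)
Your overall strategy (color the edges of the support tree $\tree'$ by $\omega_1$ or $\omega_2$, extract a local rule from the $Q_3(sl_3(\C))$ blocks and a gluing rule from condition (2), then do a parity count along leaf-to-leaf paths) is the same as the paper's, but your flip rule is wrong, and the error is fatal rather than a matter of convention. The vertices the lemma counts are those that are trivalent \emph{in} $\tree'$, i.e.\ the branch points of the support; at such a vertex all three incident edges carry nonzero weight, so the block sitting there is forced to be $X$ or $Y$, which is uniform in color and therefore produces \emph{no} local flip. The $P_{ij}$ blocks occur exactly at the vertices of degree $2$ in $\tree'$ (which the lemma does \emph{not} count), and there the color \emph{does} flip locally. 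So your plan in the ``main obstacle'' paragraph --- prove that every trinode on the path carries a flip via a $P_{ij}$, and rule out $X$ or $Y$ blocks in the interior of the path --- is exactly backwards: an $X$ or $Y$ block necessarily appears at every branch point of $\tree'$ along the path, and those are precisely the non-flip vertices. The correct bookkeeping is: one flip per internal edge crossed (gluing) plus one flip per degree-$2$ vertex, none at branch points. On a path through $k$ vertices of which $t$ are branch points, the total number of flips is $(k-1)+(k-t) \equiv t+1 \pmod 2$, so the two leaf values agree (and hence can both be $\omega_1$) precisely when $t$ is odd. Equivalently, in the paper's language: the $X/Y$ blocks alternate from one branch point to the next, and the branch points nearest each leaf must both be $X$, forcing an odd number of them.

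Your write-up already shows the symptom: you conclude that the end colors agree exactly when the number of trinodes (equals flips, in your accounting) is even, then assert the lemma's ``odd,'' and defer the discrepancy to ``fixing the convention.'' No convention repairs this, and you also conflate internal edges with trinodes, which differ by one on any path. A concrete counterexample to your rule: in the caterpillar tree $\tree_0$, take $\tree'$ spanned by leaves $1,2,3$, with generator given by $X$ at the branch point and a $P$ block at the degree-$2$ vertex. This generator lies in $P_{\tree_0}$ (it is one of the generators $w_{1,2,3}$ used in the Gel'fand-Tsetlin comparison), yet the path from leaf $1$ to leaf $3$ crosses one internal edge and two vertices, so your rule predicts either an odd number of flips (end colors disagree) or an ``even number of trinodes'' obstruction --- both wrong. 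To close the gap you must separate the two kinds of vertices on the path and prove the no-flip rule at branch points and the flip rule at degree-$2$ vertices; after that, the parity statement follows as above.
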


\begin{proof}
Any generator $w \in P_{\tree}$ can only have the first fundamental weight as a boundary value. Assuming this is the value of $w$ at a leaf $\ell_1,$ we see that each pair of trinodes encountered on the unique path to a second leaf $\ell_2$ switches the weighting, either from $(1, 0)$ to $(0, 1)$ or vice-versa.  It follows that exactly an odd number of such trinodes can be encountered on any such path.
\end{proof}

\begin{figure}[htbp]
\centering
\includegraphics[scale = 0.35]{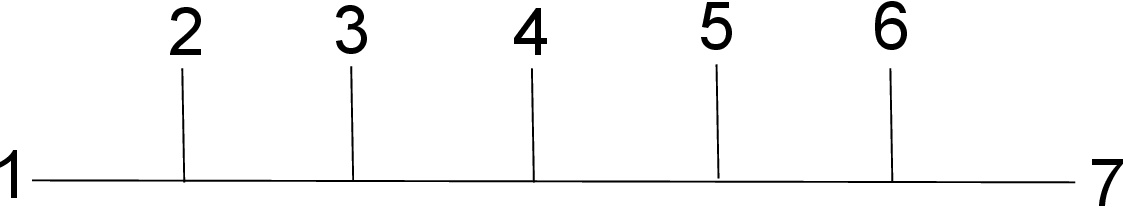}
\caption{A caterpillar graph}
\label{fig:caterpillar}
\end{figure}

We now look at a special case, $P_{\tree_0}$, where $\tree_0$ is the caterpillar tree, depicted in Figure \ref{fig:caterpillar}.  Note that all trinodes in $\tree_0$ are connected to some leaf.  It follows that the only generators in $P_{\tree_0}$ are those defined by the proper subtrees $\tree_{i, j, k}$, which contain one trinode, with three paths emanating to the leaves on the specified indices, for all $\{i, j, k\} \subset \{1, \ldots, n\}.$  

These generators are then subject to the induced relations from $Q_{\tree}(sl_3(\C)).$
Since the generator $Y \in Q_3(sl_3(\C))$ never appears at a trinode of a generator of $P_{\tree_0},$ the cubic relations we described above make no contribution.  It follows that 
the ideal $I_{\tree_0}$ that vanishes on the generators of $P_{\tree_0}$ is purely quadratic and binomial. For two generators $w_{i,j,k}$ and $w_{r, s, t}$ we have the following relations, 

\begin{equation}
w_{i, j, k}w_{r, s, t} = w_{r, j, k}w_{i, s, t}, \ \ \  i, r > j, s\\
\end{equation}

\begin{equation}
w_{i, j, k}w_{r, s, t} = w_{i, j, t}w_{r, s, k}, \ \ \  s, j > t, k\\
\end{equation}

In \cite{MSt}, Theorem $14.16$, Sturmfels and Miller construct a degeneration
of the Pl\"ucker algebra $P_{3, m}$ to a semigroup $GT_n(\omega_3)$  (our notation) of Gel'fand-Tsetlin patterns for the dominant weight $\omega_3$ of $GL_n(\C).$  These are weightings of the vertices of the diagram below by non-negative integers for some $\lambda \in \Z_{\geq 0}$ which satisfy the $interlacing$ $inequalities$ indicated by the arrows. Namely, whenever an arrow connects two entries $m \to f$, we must have $f \geq m.$

\begin{figure}[htbp]
$$
\begin{xy}
(0,0)*{\bullet} = "1";
(24,0)*{\bullet} = "2";
(48,0)*{\bullet} = "3";
(12,-15)*{\bullet} = "a";
(36,-15)*{\bullet} = "b";
(60,-15)*{\bullet} = "c";
(24,-30)*{\bullet} = "4";
(48,-30)*{\bullet} = "5";
(72,-30)*{\bullet} = "6";
(36,-45)*{\bullet} = "e";
(60,-45)*{\bullet} = "f";
(84,-45)*{\bullet} = "g";
(48,-60)*{\bullet} = "7";
(72,-60)*{\bullet} = "8";
(60,-75)*{\bullet} = "h";
"1"; "a";**\dir{-}? >* \dir{>};
"a"; "2";**\dir{-}? >* \dir{>};
"2"; "b";**\dir{-}? >* \dir{>};
"b"; "3";**\dir{-}? >* \dir{>};
"3"; "c";**\dir{-}? >* \dir{>};
"a"; "4";**\dir{-}? >* \dir{>};
"4"; "b";**\dir{-}? >* \dir{>};
"b"; "5";**\dir{-}? >* \dir{>};
"5"; "c";**\dir{-}? >* \dir{>};
"c"; "6";**\dir{-}? >* \dir{>};
"4"; "e";**\dir{-}? >* \dir{>};
"e"; "5";**\dir{-}? >* \dir{>};
"5"; "f";**\dir{-}? >* \dir{>};
"f"; "6";**\dir{-}? >* \dir{>};
"6"; "g";**\dir{-}? >* \dir{>};
"e"; "7";**\dir{-}? >* \dir{>};
"7"; "f";**\dir{-}? >* \dir{>};
"f"; "8";**\dir{-}? >* \dir{>};
"8"; "g";**\dir{-}? >* \dir{>};
"7"; "h";**\dir{-}? >* \dir{>};
"h"; "8";**\dir{-}? >* \dir{>};
(4,0)*{\lambda};
(28,0)*{\lambda};
(52,0)*{\lambda};
(16,-15)*{\lambda};
(40,-15)*{\lambda};
(64,-15)*{a_{1, 3}};
(28,-30)*{\lambda};
(52,-30)*{a_{1, 2}};
(76,-30)*{a_{2, 3}};
(40,-45)*{a_{1, 1}};
(64,-45)*{a_{2, 2}};
(88,-45)*{a_{3, 3}};
(52,-60)*{a_{2, 1}};
(76,-60)*{a_{3, 2}};
(64,-75)*{a_{3, 1}};
\end{xy}
$$\\
\caption{A Gel'fand-Tsetlin pattern in $GT_6(\omega_3)$}
\label{fig:GT}
\end{figure}
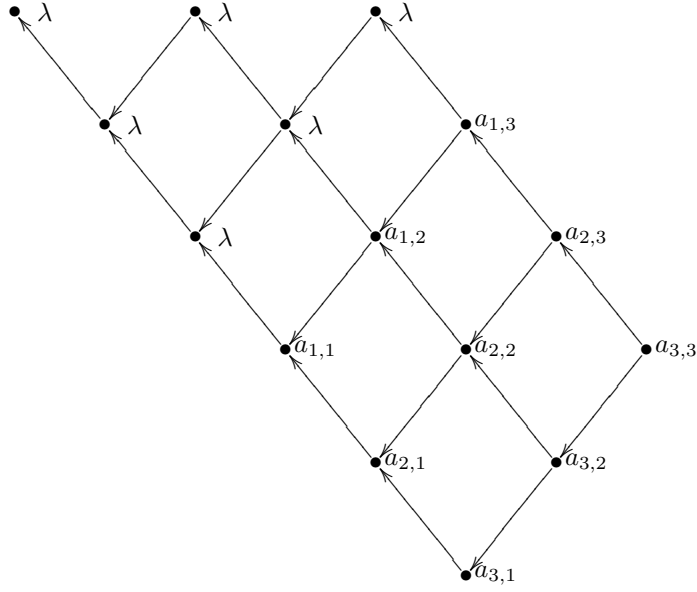

Notice the pattern is composed of three diagonals that go from top left to bottom right. 
Because the upper left triangle is forced to have all weights equal to $\lambda$ by the interlacing inequalities, effectively each of these diagonals has length $n-3,$ we label
the entries on the diagonals accordingly, starting from the top left, see Figure \ref{fig:GT}. A generator $w_{i, j, k}, i < j < k$ is mapped to the unique pattern with $\lambda = 1 = a_{i - 1,3 } = a_{j - 2 ,2 } = a_{k- 3 ,1},$ and everything "below" these entries equal to $0$.

In \cite{MSt}, the ideal $J_{3, n} \subset \C[\ldots w_{i, j, k} \ldots;\ i < j < k ]$ which cuts out the semigroup algebra $\C[GT_n(\omega_3)]$ is generated by binomials of the form 

\begin{equation}
w_{i, j, k}w_{r, s, t} - w_{(i, j, k) \wedge (r, s, t)} w_{(i, j, k) \vee (r, s, t)}\\
\end{equation}

\noindent
where $(i, j, k) \wedge (r, s, t)$ is the triple $(min(i, r), min(j, s), min(k, t))$
and $(i, j, k) \vee (r, s, t)$ is the triple $(max(i, r), max(j, s), max(k, t)).$
Note that if $w_{\sigma}w_{\tau} - w_{\sigma'}w_{\tau'}$ is either of the binomial
generators of $I_{\tree_0}$ defined above, then $\sigma \wedge \tau = \sigma' \wedge \tau'$ and $\sigma \vee \tau = \sigma' \vee \tau'.$  It follows then that $I_{\tree_0} \subset J_{3, n}$. 

On the other hand, the relation $w_{i, j, k}w_{r, s, t} - w_{(i, j, k) \wedge (r, s, t)} w_{(i, j, k) \vee (r, s, t)}$ is obtained by switching either the first, the last, or both the first and the last entries of the triple indices, and so it can be obtained from the relations defining $I_{\tree_0}.$  Therefore we must have $I_{\tree_0} = J_{3, n}$.

The generators of each semigroup $P_{\tree}$  contain a subset corresponding to triples of indices $(i, j, k)$ as the generators above.  However, if $\tree$ is not homeomorphic to $\tree_0$, it is a simple exercise to check that it must automatically contain additional minimal generators.   In this sense $P_{\tree_0}$ is the simplest member of this family of semigroups. 
We also remark that it is possible to construct an explicit bijection $P_{\tree_0} \cong GT_n(\omega_3)$, and this can also be done for the corresponding semigroups for all $sl_m(\C).$

\bigskip
\noindent
Christopher Manon:\\
cmanon@gmu.edu,\\
Department of Mathematics,\\ 
George Mason University\\ 
Fairfax, VA 22030 USA\\

\bigskip
\noindent
Zhengyuan Zhou:\\
zyzhou@stanford.edu,\\
Department of Electrical Engineering,\\ 
Stanford University\\ 
Stanford, CA 94305 USA\\

\date{\today}

\end{document}